\RequirePackage[l2tabu, orthodox]{nag}
\documentclass[11pt]{article}

\usepackage[utf8]{inputenc}
\usepackage[T1]{fontenc}
\usepackage{lmodern}
\usepackage[british]{babel}
\usepackage{csquotes}
\usepackage{microtype}
\usepackage[a4paper]{geometry}
\usepackage{hyperref}
\usepackage{graphicx}  
\usepackage{subcaption} 
\usepackage{multirow}
\usepackage{booktabs}
\usepackage{pgfplots}
\usepackage{tikz}
\usepackage{tikzscale}
\usepackage{amsmath}
\usepackage{amssymb}
\usepackage{amsthm}
\usepackage{mathtools}
\usepackage{siunitx}
\usepackage{dsfont}
\usepackage{authblk}
\usepackage{xifthen}
\usepackage[section]{placeins} 
\usepackage{mleftright}
\usepackage{fancyhdr}

\definecolor{TolDarkPurple}{HTML}{332288}
\definecolor{TolDarkBlue}{HTML}{6699CC}
\definecolor{TolLightBlue}{HTML}{88CCEE}
\definecolor{TolLightGreen}{HTML}{44AA99}
\definecolor{TolDarkGreen}{HTML}{117733}
\definecolor{TolDarkBrown}{HTML}{999933}
\definecolor{TolLightBrown}{HTML}{DDCC77}
\definecolor{TolDarkRed}{HTML}{661100}
\definecolor{TolLightRed}{HTML}{CC6677}
\definecolor{TolLightPink}{HTML}{AA4466}
\definecolor{TolDarkPink}{HTML}{882255}
\definecolor{TolLightPurple}{HTML}{AA4499}

\pgfplotsset{compat=newest}
\mathtoolsset{showonlyrefs} 
\numberwithin{equation}{section}
\pgfkeys{/pgf/number format/.cd,1000 sep={\thinspace}} 
\hypersetup{colorlinks=true,
  linkcolor=TolDarkBlue,
  citecolor=TolLightGreen,
  urlcolor=TolDarkRed}
\definecolor{InfernoMax}{rgb}{0.988362,0.998364,0.644924}
\sisetup{retain-unity-mantissa=false,
  exponent-product=\centerdot, 
  binary-units=true}

\usepackage[backend=biber,
hyperref=auto,
style=numeric,
citestyle=numeric-comp,
sorting=nyt,
block=space,
maxbibnames=9,
maxcitenames=2
]{biblatex}
\addbibresource{1710.08307.bib}

\DeclareMathOperator*{\diag}{\ensuremath{\mathrm{diag}}}
\DeclareMathOperator*{\vspan}{\ensuremath{\mathrm{span}}}
\DeclareMathOperator*{\vdim}{\ensuremath{\mathrm{dim}}}
\DeclareMathOperator*{\rank}{\ensuremath{\mathrm{rank}}}
\DeclarePairedDelimiter{\abs}{\lvert}{\rvert}
\DeclarePairedDelimiter{\norm}{\lVert}{\rVert}

\makeatletter
\newcommand{\myie}{i.e\@ifnextchar.{}{.\@}}
\newcommand{\myIe}{I.e\@ifnextchar.{}{.\@}}
\newcommand{\myeg}{e.g\@ifnextchar.{}{.\@}}
\newcommand{\myviz}{viz\@ifnextchar.{}{.\@}}
\newcommand{\mycf}{cf\@ifnextchar.{}{.\@}}
\makeatother

\theoremstyle{plain}
\newtheorem{prop}{Proposition}
\theoremstyle{definition}
\newtheorem{hyp}[prop]{Assumption}
\theoremstyle{remark}
\newtheorem{rmk}[prop]{Remark}

\fancyhf{} 
\fancyhf[HL]{\small\leftmark} 
\fancyhf[FL]{\small Q. AYOUL-GUILMARD, A. NOUY, C. BINETRUY}
\fancyhf[FR]{\thepage}
\pagestyle{fancy}

\title{\bfseries Tensor-based multiscale method for diffusion problems in quasi-periodic heterogeneous media}
\author[1]{Quentin Ayoul-Guilmard\thanks{quentin.ayoul-guilmard@centraliens-nantes.net}}
\author[2]{Anthony Nouy\thanks{anthony.nouy@ec-nantes.fr}}
\author[1]{Christophe Binetruy}
\affil[1]{\'Ecole Centrale de Nantes, GeM UMR CNRS 6183, Nantes, France}
\affil[2]{\'Ecole Centrale de Nantes, LMJL UMR CNRS 6629, Nantes, France}

\date{}

\begin{document}

\maketitle

\begin{abstract}
  This paper proposes to address the issue of complexity reduction for the numerical simulation of multiscale media in a quasi-periodic setting.
  We consider a stationary elliptic diffusion equation defined on a domain $D$ such that $\overline{D}$ is the union of cells $\{\overline{D_i}\}_{i\in I}$ and we introduce a two-scale representation by identifying any function $v(x)$ defined on $D$ with a bi-variate function $v(i,y)$, where $i \in I$ relates to the index of the cell containing the point $x$ and $y \in Y$ relates to a local coordinate in a reference cell $Y$.
  We introduce a weak formulation of the problem in a broken Sobolev space $V(D)$ using a discontinuous Galerkin framework.
  The problem is then interpreted as a tensor-structured equation by identifying $V(D)$ with a tensor product space $\mathbb{R}^I \otimes V(Y)$ of functions defined over the product set $I\times Y$.
  Tensor numerical methods are then used in order to exploit approximability properties of quasi-periodic solutions by low-rank tensors.
  
  \bigskip{}\noindent\textbf{2010 Mathematics subject classification:} 15A69, 35B15, 65N30.

  \medskip{}\noindent\textbf{Keywords:} quasi-periodicity, tensor approximation, discontinuous Galerkin, multiscale, heterogeneous diffusion.

\end{abstract}

\section{Introduction}

Heterogeneous periodic media are increasingly common in the industry, particularly owing to the use of architectured microstructure (\myeg{} composite materials).
Their complex behaviour calls for thorough and expensive experimental investigations.
As an alternative, numerical simulations involve fine-scale models which often require heavy computations.
Periodicity assumption on the medium means that all its information is contained within a single cell, which can be exploited in practical resolutions (\myeg{} homogenisation).
Nonetheless, the need to withdraw this assumption arises with situations such as defect impact studies; this raises a computational challenge.

To the best authors' knowledge, there exist currently two families of approaches available to tackle such problem more efficiently than brute fine-scale computation---such as typical finite element method.
First is the set of multiscale methods such as Multiscale Finite Element Method (MsFEM)~\cite{Efendiev2009a,Allaire2005,Hou1997,Bal2011}, Heterogeneous Multiscale Method (HMM)~\cite{Abdulle2012b,Engquist2007,Bal2011} or patch methods~\cite{Lions1999a,Glowinski2003,Rezzonico2007,Gendre2011a,Chevreuil2013b}.
Although these are designed to address the issue of multiscale complexity, they are intended for broader purposes than our particular case of interest; as such, they fail to achieve the complexity reduction one could expect from a quasi-periodicity assumption.
Secondly, progress has been made over the past few years toward exploitation of quasi-periodicity in stochastic homogenisation methods.
These works focus on computational cost reduction of classical stochastic homogenisation through suitable assumption on the stochastic model~\cite{LeBris2009,Blanc2006,Anantharaman2011a}, as well as specific variance reduction schemes~\cite{Blanc2012a,Legoll2015a,Legoll2015b} and an adaptation to special quasirandom structures used in atomistic simulations~\cite{LeBris2016}.
The aforementioned methods exploit quasi-periodicity in order to reduce the number of supercell problems to solve, comparatively to classical stochastic homogenisation.
Consequently, they are cost-efficient to compute good approximations of homogenised quantities of a material ideally periodic yet perturbed by random imperfections.
They do not, however, reduce complexity of a given deterministic, quasi-periodic supercell problem such as those they involve.
To address this computational bottleneck, various adaptations of aforementioned general multiscale methods have been developed (\myeg{}~\cite{LeBris2014a}).
Several noteworthy approaches based on reduced basis methods (whose principle is explained in~\cite{Maday2006c}) have been developed to exploit quasi-periodic patterns, such as~\cite{Abdulle2012c,Boyaval2008a,LeBris2012a}.
We propose here a multiscale method designed specifically to address such quasi-periodic problems.

Section~\ref{sec:disc-galerk-form} will introduce the reference problem, a two-scale representation and the related discontinuous Galerkin formulation.
In section~\ref{sec:tensor}, we identify the problem as an operator equation in a Hilbert tensor space and we use a greedy algorithm for the construction of a sequence of low-rank approximations of the solution.
Finally, section~\ref{sec:numerical-results} illustrates the efficiency of the proposed method through a number of representative numerical experiments.

\section{Reference problem and discontinuous Galerkin formulation}
\label{sec:disc-galerk-form}

Let $D\subset\mathbb{R}^d$ be an open rectangular cuboid.
We consider a stationary diffusion equation 
\begin{gather}
  - \nabla\cdot(K\nabla u) = f \quad \text{ in } D \label{eq:diffusion-strong},
\end{gather}
with periodic boundary conditions, where $K$ is the diffusion (or \enquote{conductivity}) coefficient and $f$ is a source term. An example of quasi-periodic heterogeneous two-phase material is given on figure~\ref{fig:cookies}.

\begin{figure}
  \centering
  \includegraphics{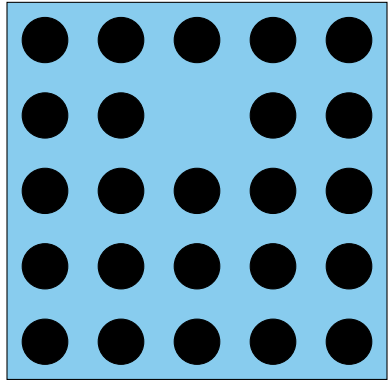}
  \caption{Periodic medium with one defect}
  \label{fig:cookies}
\end{figure}

We assume that $K\in L^\infty(D)$ and $f\in L^2(D)$. A weak solution
\begin{gather}
  u\in H^1_{per}(D)= \mleft\{v\in H^1(D) : v~D\text{-periodic}\mright\}
\end{gather}
of~\eqref{eq:diffusion-strong} is such that 
\begin{gather}
  \label{eq:diffusion-continuous}
  \int_D K\nabla u \cdot\nabla v = \int_Dfv, \quad \forall v\in H^1_{per}(D).
\end{gather}

\subsection{Mesoscopic discretisation}
\label{sec:discon-setting}

We introduce a partition of $\overline{D}$ into closed domains $\{\overline{D_i}\}_{i\in I}$, where $I$ is totally ordered set.
The subsets $(D_i)_{i\in I}$ are open and identical up to a translation.
They will be called \enquote{cells} and are chosen so as to fit the quasi-periodically repeated patterns (see figure~\ref{fig:cookies-grid}).

\begin{figure}
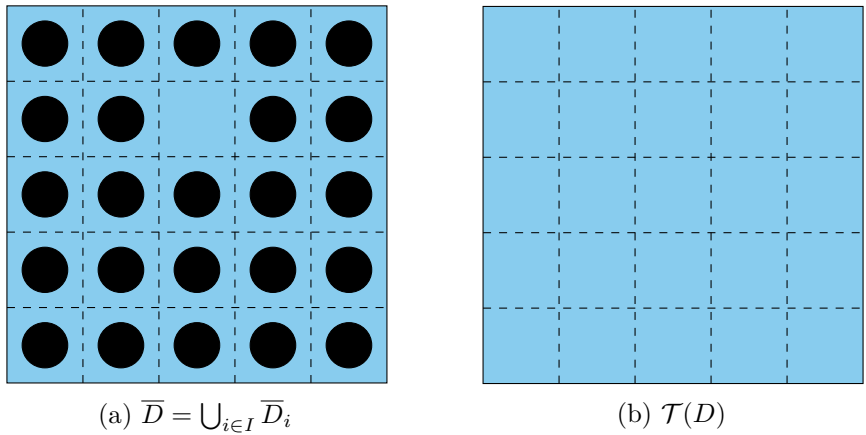

  \centering{}
  \begin{subfigure}{.4\linewidth}
    \centering
    \includegraphics{cookies_meso-grid.tikz}
    \caption{$\overline D=\bigcup_{i\in I} \overline D_i$}
    \label{fig:cookies-grid}
  \end{subfigure}
  ~
  \begin{subfigure}{.4\linewidth}
    \centering
    \includegraphics{meso-mesh.tikz}
    \caption{$\mathcal{T}(D)$}
    \label{fig:meso-mesh}
  \end{subfigure}
  \caption{Mesoscopic mesh $\mathcal{T}(D)$ of domain $D$}
\end{figure}

The set of cells defines a mesoscopic mesh $ \mathcal{T}(D)=\{D_i:i\in I\}$ over $D$ (see figure~\ref{fig:meso-mesh}).
We denote by $\mathcal{F}(D)$ the set of \emph{faces}, by $ \mathcal{F}_e(D) =\{F\in\mathcal{F}(D)~:~F\subset\partial D\}$ the set of \emph{external faces}, and by $\mathcal{F}_i(D) = \mathcal{F}(D)\setminus\mathcal{F}_e(D)$ the set of \emph{internal faces}.
We define in the same way $\mathcal{F}(D_i)$, the set of faces of cell $D_i$ for any $i\in I$.

For a face $F\in \mathcal{F}_i{}(D)$, we let $(i,j) \in I^2$ be the unique ordered pair of indices such that $F = \partial D_i\cap \partial D_j$.
We denote by $n_F$ the unit normal vector of face $F$, outward of $D_i$.
For a function $v$ defined over $D_i\cup D_j$, we denote---if it exists---the trace $(v_{|D_i})_{|F}$ on $F$ of its restriction $v_{|D_i}$.
We then define the average operator over face $F$ $\{\cdot\}_F$ by $\{v\}_F = \frac{1}{2}((v_{|D_i})_{|F}+(v_{|D_j})_{|F})$, and the jump operator over face $F$ $[\cdot]_F$ by $[v]_F = ((v_{|D_i})_{|F}-(v_{|D_i})_{|F})$.

For the sake of simplicity, the subscript $F$ will be omitted whenever the face related to is obvious.
Periodic boundary conditions allow to extend these definitions to external faces by identifying a face $F = \partial D_i \cap \partial D\in\mathcal{F}_e(D)$ with the opposite face $F'=\partial D_j \cap \partial D\in\mathcal{F}_e(D)$, which we will use below (see remark~\ref{rmk:pbc-faces}).
For the definition of the normal $n_F$ and the jump operator, we use again the convention $i<j$.

We then introduce the broken Sobolev space
\begin{gather}
  H^1\mleft(\bigcup_{i\in I} D_i\mright)= \mleft\{v \in L^2(\mathbb{R}) : \forall i\in I, v_{|D_i}\in H^1(D_i) \mright\}.
\end{gather}
It should be noted that, since the cells $D_i$ are open, $\bigcup_{i\in I} D_i\neq D$.

\subsection{Symmetric weighted interior penalty (SWIP) formulation}
\label{sec:symm-inter-penalty}
We make the following assumption on the regularity of the solution.

\begin{hyp}
  \label{hyp:regularity}
  We assume that the solution $u$ of~\eqref{eq:diffusion-strong} is in $H^1_{per}(D)\cap H^2(D)$ so that, for all $ F\in\mathcal{F}(D)$,
  \begin{gather}
    \int_F [u] = 0  \quad \text{and} \quad  \int_F [K\nabla u]\cdot n = 0.
  \end{gather}
\end{hyp}
From~\textcite[th. 1 § 6.3.1 p. 309]{Evans1998a}, if $-\nabla\cdot(K\nabla u)=f$ with $K\in C^1(D)$ and $f\in L^2(D)$, then $u\in H^2_{loc}(D)$ whatever the boundary conditions.
If $u$ is $D$-periodic, then $u\in H^2(D)$ since
\begin{align*}
  \mleft\{v\in H^2_{loc}(D) : v \text{ is $D$-periodic}\mright\} = \mleft\{v\in H^2(D) : v \text{ is $D$-periodic}\mright\},
\end{align*}
Therefore, assumption~\ref{hyp:regularity} is verified in our case if $K\in C^1(D)$.
For Dirichlet and Neumann boundary conditions, we refer the reader to~\textcite[th. 3.12 p. 119]{Ern2004}.

For the discontinuous Galerkin formulation to come, we introduce a subset of $H^1(\bigcup_{i\in I}D_i)$ defined as
\begin{gather}
  V(D) := \mleft\{v\in H^1\mleft(\bigcup_{i\in I}D_i\mright) : \forall i \in I, (\nabla v_{|D_i})_{|\partial D_i} \in L^2(\partial D_i)^d \mright\}.
\end{gather}
Then the solution $u$  of~\eqref{eq:diffusion-strong} satisfies
\begin{gather}
  \forall v \in V(D), \quad a(u,v) - c(u,v) = b(v) \label{eq:dG-consistent},
\end{gather}
where $a$ and $c$ are bilinear forms over $V(D)$ respectively defined by
\begin{gather}
  a(u,v) = \sum_{i\in I}\int_{D_i} K\nabla u \cdot\nabla v \label{eq:dG-a}, \quad  c(u,v)  = \sum_{F\in \mathcal{F}(D)} \int_F n\cdot\{K\nabla u\}[v],
\end{gather}
and $b$ is a linear form defined by
\begin{gather}
  b(v) = \int_D fv.
\end{gather}
From assumption~\ref{hyp:regularity} and from the $D$-periodicity of $u$, we have that $u$ also satisfies
\begin{gather}
  \label{eq:dG-SIP}
  \forall v\in V(D), \quad a(u,v) - c(u,v) - c(v,u) + \sum_{F\in\mathcal{F}(D)}\frac{\sigma}{\abs{F}}\int_F [u][v] = b(v),
\end{gather}
with $\abs{F}$ the measure of face $F$, and with $\sigma $ a positive penalty parameter.
Equation \eqref{eq:dG-SIP} corresponds to the symmetric interior penalty (SIP) formulation of~\eqref{eq:diffusion-strong} (see \cite{DiPietro2011} for a detailed explanation), which involves a coercive bilinear form for a sufficiently high value of the penalisation parameter $\sigma$.

In the present context, $K$ may show strong heterogeneities.
The symmetric weighted interior penalty (SWIP) method \cite{DiPietro2011}, a variant of SIP, is designed to account for this by introducing weights in the definition of averages on faces and in the penalty term.
For a cell $D_i$, we let $ k_i^+$ and $k_i ^-$ be the constants defined by
\begin{gather}
  \label{eq:def-K-bounds}
  k_i^- = \inf_{x \in D_i} \lambda_{min}(K(x))\quad  \text{and} \quad k_i^+ = \sup_{x \in D_i} \lambda_{max}(K(x)),
\end{gather}
where $\lambda_{min}(A)$ and $\lambda_{max}(A)$ respectively denote the minimum and maximum eigenvalue of a symmetric matrix $A$.
For a face $F=\partial D_i\cap \partial D_j\in\mathcal{F}(D)$, we define a stabilisation weight $\omega_F$ and average weights $\beta_F^- $ and $\beta_F^+$ as
\begin{gather}
  \label{eq:def-weights}
  \omega_{F} = \frac{2k^+_ik^+_j}{k^+_i+k^+_j},  \quad 
  \beta_{F}^-  = \frac{k_i^+}{k_i^++k_j^+}, \quad  \beta_{F}^+  = \frac{k_j^+}{k_i^++k_j^+}.
\end{gather}
Then, we redefine the average operator $ \{\cdot\}_{F} $ over $F$ by
\begin{gather}
  \label{eq:weighted-avg}
  \{v\}_{F} = \beta_{F}^- (v_{|D_i})_{|F} + \beta_{F}^+(v_{|D_j})_{|F},
\end{gather}
and we introduce a stabilisation bilinear form
\begin{gather}
  \label{eq:stab-form}
  s(v,w) =  \sum_{F\in \mathcal{F}(D)} \sigma\frac{\omega_F}{\abs{F}}\int_F [w]_F[v]_F.
\end{gather}

The problem with periodic boundary conditions admits infinitely many solutions that differ by a constant.
We decide to fix this constant by choosing a particular solution in the kernel of the linear form $\phi(v) = \int_D v$.
This is achieved by introducing a symmetric bilinear form
\begin{gather}
  m(u,v) = \phi(u)\phi(v)
\end{gather}
whose left kernel is the kernel of $\phi$.

Finally, we achieve a consistent SWIP formulation, \myie{} the solution $u\in H^1_{per}(D)\cap H^2(D)$ of~\eqref{eq:diffusion-strong} verifies
\begin{gather}
  \label{eq:dG-SWIP-D}
  \forall v\in V(D), \quad a^{swip}(u,v) = b(v),
\end{gather}
with $a^{swip}(u,v) = a(u,v) - c(u,v) - c(v,u) + s(u,v) + m(u,v)$.

\begin{rmk}[Periodic boundary conditions' enforcement]
  \label{rmk:pbc-faces}
  As explained in section~\ref{sec:discon-setting}, periodic boundary conditions give meaning to an extension of face jump and face average operators to external faces.
  As far as $D$-periodic functions are concerned, these external faces can be considered as internal faces.
  Thus, in formulation~\eqref{eq:dG-SWIP-D}, periodic boundary conditions are weakly enforced through the terms $\frac{\sigma\omega_F}{\abs{F}}\int_F [u][v]$ associated with faces $F\in \mathcal{F}_e(D)$ in the bilinear form $a^{swip}(u,v)$.
\end{rmk}

\subsection{Coercivity}
\label{sec:coercivity}

We choose a finite dimensional subspace $V_h(D)\subset V(D)$ and consider the problem whose solution $u_h\in V_h(D)$ satisfies
\begin{gather}
  \label{eq:dG-SWIP-D-h}
  \forall v_h\in V_h(D), \quad  a^{swip}(u_h,v_h) = b(v_h).
\end{gather}
As a closed subspace of a Hilbert space, $V_h(D)$ is a Hilbert space itself; therefore, problem~\eqref{eq:dG-SWIP-D-h} is well posed if $a^{swip}$ is coercive on $V_h(D)$.
Then $u_h$ would be a Galerkin approximation of $u$.

The bilinear form $a^{swip}$ can be proven to be coercive on $V_h(D)$ for a sufficiently high value of parameter $\sigma$ in the stabilisation form~\eqref{eq:stab-form}~\cite{DiPietro2011,Lin2016a}.
For meshes of simplices and when using polynomial spaces $V_h(D_i)$, a lower bound for $\sigma$ can be found in~\cite{Epshteyn2007}.
In this section we provide a lower bound on $\sigma$ to have the coercivity of the bilinear form $a^{swip}$ on $V_h(D)$, with an explicit expression of the coercivity constant allowing its evaluation for any finite dimensional approximation subspace of $V(D)$.

We equip the broken Sobolev space $H^1(\bigcup_{i\in I}D_i)$ with the norm $\norm{\cdot}_E$ defined by
\begin{gather}
  \norm{v}_E^2 = a(v,v)+s(v,v)+m(v,v).
\end{gather}
The application $v\mapsto (a(v,v)+s(v,v))^{1/2}$ defines a semi-norm on $H^1(\bigcup_{i\in I}D_i)$, and the addition of $m$ ensures that $\norm{\cdot}_E$ is a norm.
It is, a fortiori, a norm on $V_h(D) \subset H^1(\bigcup_{i\in I}D_i)$.

\begin{prop}[Discrete trace inequality]
  \label{prop:disc-trace-ineq}
  Let $D_i\in\mathcal{T}(D)$ and $F\in\mathcal{F}(D_i)$.
  Then
  \begin{gather}
    \exists C(V_h(D_i),F)>0, \forall v\in V_h(D_i),\quad \norm*{\nabla v_{|F}}_{L^2(F)^d} \leqslant C(V_h(D_i),F) \norm*{\nabla v}_{L^2(D_i)^d},
  \end{gather}
  where $C(V_h(D_i),F)$ depends on $V_h(D_i)$ and $F$.
\end{prop}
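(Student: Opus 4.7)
The strategy is to exploit the fact that $V_h(D_i)$ is finite-dimensional: the inequality is trivially true on any finite-dimensional space where both sides define compatible semi-norms, and the constant will simply be the operator norm of the trace-gradient map.

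The plan is the following. First, observe that the map $T \colon v \mapsto \nabla v_{|F}$ is linear from $V_h(D_i)$ into $L^2(F)^d$, and that $q(v) := \norm{\nabla v}_{L^2(D_i)^d}$ is a semi-norm on $V_h(D_i)$ whose kernel consists exactly of the constant functions on $D_i$. Crucially, any such constant function also satisfies $Tv = 0$, so the kernel of $q$ is contained in the kernel of $p \colon v \mapsto \norm{\nabla v_{|F}}_{L^2(F)^d}$. Therefore both $p$ and $q$ descend to the finite-dimensional quotient space $W := V_h(D_i)/\ker q$, on which $q$ induces a genuine norm and $p$ induces a continuous semi-norm.

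Next, I would conclude by a standard compactness argument in finite dimensions: the unit sphere $\{[v]\in W : q(v)=1\}$ is compact, and $p$ is continuous on it, so
\begin{gather*}
  C(V_h(D_i),F) := \sup_{[v]\in W,\, q(v)=1} p(v)
\end{gather*}
is finite. For an arbitrary $v \in V_h(D_i)$, the inequality $p(v)\leqslant C(V_h(D_i),F)\,q(v)$ is obvious if $q(v)=0$ (both sides vanish), and otherwise follows by homogeneity after dividing by $q(v)$ and passing to the equivalence class $[v]\in W$. This yields the announced constant, which by construction depends only on the approximation space $V_h(D_i)$ and the face $F$.

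There is no real obstacle in this argument; the only point requiring minor care is the reduction to a quotient by $\ker q$ rather than directly applying norm-equivalence on $V_h(D_i)$, since $q$ alone is not a norm on $V_h(D_i)$. Explicit upper bounds on $C(V_h(D_i),F)$ in terms of the polynomial degree and mesh geometry (as in the references \cite{Epshteyn2007,DiPietro2011}) are available for the typical case of polynomial spaces on simplicial meshes, but are not needed for this abstract statement.
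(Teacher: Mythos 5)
Your proof is correct and rests on the same idea as the paper's: finite-dimensionality forces the trace semi-norm to be controlled by the gradient norm. The paper works on the space of gradients $W=\{\nabla v : v\in V_h(D_i)\}$ and invokes equivalence of norms there, which is isomorphic to your quotient $V_h(D_i)/\ker q$, so the two arguments are essentially identical.
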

\begin{proof}[Proof of proposition~\ref{prop:disc-trace-ineq}]
  Let $i\in I$, $v\in V_h(D_i)$ and $F\in\mathcal{F}(D_i)$.
  From the definition of $V_h(D_i)$ above, we have
  \begin{gather}
    \norm*{\nabla v_{|F}}_{L^2(F)^d} \leqslant \norm*{\nabla v_{|F}}_{L^2(F)^d} + \norm*{\nabla v}_{L^2(D_i)^d}.
  \end{gather}
  The application   $\norm{\cdot}_{L^2(F)^d} + \norm{\cdot}_{L^2(D_i)^d}$ is a norm on the subspace $W = \{\nabla v : v\in V_h(D_i)\}$ of $L^2(D_i)$.
  Since $W$ is of finite dimension, this norm is equivalent to $\norm{\cdot}_{L^2(D_i)^d}$ on $W$, which means that there exists $C(V_h(D_i),F)>0$, independent of $v$, such that
  \begin{gather}
    \norm*{\nabla v_{|F}}_{L^2(F)^d} + \norm*{\nabla v}_{L^2(D_i)^d} \leqslant C(V_h(D_i),F)\norm*{\nabla v}_{L^2(D_i)^d}.
  \end{gather}
\end{proof}

Before stating the next result, we introduce some notations.
We denote the maximum number of faces of elements in $\mathcal{T}(D)$ by $N_{\mathcal{F}} = \max\mleft\{\#\mathcal{F}(D_i) : i\in I\mright\}$, the upper bound of face measures by $\abs{\mathcal{F}}^+=\max\{\abs{F} : F\in\mathcal{F}(D)\}$, the upper and lower bounds for the eigenvalues of the diffusion operator by $k_{max}^+ = \max \{k_i^+:i\in I\}$ and $ k_{min}^- = \min\{k_i^- : i \in I\}$, the upper bound of the average weights by $\beta_{max} = \max \mleft\{ \max\{\beta_{F}^+,\beta_{F}^-\}: F\in\mathcal{F}(D)\mright\}$, the lower bound of the weights in the stabilisation form by $\omega_{min} =\min\mleft\{\omega_F: F\in\mathcal{F}(D)\mright\}$, and the upper bound of the constant in the discrete trace inequality by $C(V_h(D)) = \max\{C(V_h(D_i),F) : i\in I, F \in \mathcal{F}(D_i)\}$.
\begin{prop}[SWIP coercivity] 
  \label{prop:coercivity}
  If 
  \begin{gather}
    \label{eq:sigma-inf}
    \sigma > \sigma_- := C(V_h(D))^2 \beta_{max}^2N_{\mathcal{F}}\abs{\mathcal{F}}^{+} \frac{k^+_{max}}{\omega_{min}} \frac{k^+_{max}}{k^-_{min} },
  \end{gather}
  then 
  \begin{gather}
    \label{eq:coercivity-swip}
    \forall v\in V_h(D),\quad
    a^{swip}(v,v) \geqslant \mleft( 1-\sqrt{\frac{\sigma_- }{ \sigma}} \mright) \norm{v}_E^2,
  \end{gather}
  \myie{} $a^{swip}$ is coercive with coercivity constant $C_{swip} = 1-\sqrt{\frac{\sigma_-}{\sigma}}$.
\end{prop}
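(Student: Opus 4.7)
The starting point is the algebraic identity $a^{swip}(v,v) = a(v,v) + s(v,v) + m(v,v) - 2c(v,v) = \norm{v}_E^2 - 2c(v,v)$, so coercivity reduces to bounding $\abs{c(v,v)}$ in terms of $a(v,v)$ and $s(v,v)$. The natural strategy is to show that for some constant $\sigma_-$ of the stated form,
\begin{gather}
  2\abs{c(v,v)} \leqslant \sqrt{\tfrac{\sigma_-}{\sigma}}\mleft(a(v,v)+s(v,v)\mright),
\end{gather}
after which, since $\norm{v}_E^2 \geqslant a(v,v)+s(v,v)$ and $m(v,v)\geqslant 0$, one deduces $a^{swip}(v,v) \geqslant (1-\sqrt{\sigma_-/\sigma})\norm{v}_E^2$ as required.

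To obtain the key inequality I would split each face contribution in $c(v,v)=\sum_F\int_F n\cdot\{K\nabla v\}[v]$ using Cauchy--Schwarz on the face, and introduce the multiplicative weight $(\sigma\omega_F/\abs{F})^{1/2}$ on the jump factor against its inverse on the flux factor. A discrete Cauchy--Schwarz on the face sum then produces two separated factors: one is $s(v,v)^{1/2}$ by construction; the other is a weighted sum $\sum_F\tfrac{\abs{F}}{\sigma\omega_F}\norm{\{K\nabla v\}}_{L^2(F)^d}^2$ that must be controlled by $a(v,v)$.

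To bound this weighted flux sum I would (i) expand $\{K\nabla v\}$ with $\beta_F^\pm$ and use convexity (or a direct inequality on convex combinations) to get $\norm{\{K\nabla v\}}_{L^2(F)^d}^2 \lesssim \beta_{max}^{\,?}(\norm{(K\nabla v)_{|D_i}}_{L^2(F)^d}^2+\norm{(K\nabla v)_{|D_j}}_{L^2(F)^d}^2)$; (ii) use $\abs{K(x)\xi}\leqslant k_i^+\abs{\xi}$ on each cell to replace $K\nabla v$ by $k_i^+\nabla v$; (iii) invoke proposition~\ref{prop:disc-trace-ineq} with constant $C(V_h(D))$ to move the $L^2(F)^d$ norm of $\nabla v$ to $L^2(D_i)^d$; (iv) re-order the face sum as a cell sum, incurring the combinatorial factor $N_{\mathcal{F}}$ since each cell has at most that many faces; and (v) finally use the cellwise lower bound $\int_{D_i}K\nabla v\cdot\nabla v\geqslant k_i^-\norm{\nabla v}_{L^2(D_i)^d}^2$ to convert the $L^2$ gradient energy into $a(v,v)$. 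Taking supremums of all the geometric, diffusion, weight, and trace constants that appear produces exactly the expression defining $\sigma_-$ in~\eqref{eq:sigma-inf}.

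Putting the pieces together gives $\abs{c(v,v)} \leqslant (\sigma_-/\sigma)^{1/2}\sqrt{a(v,v)\,s(v,v)}$, and a final application of the elementary inequality $2\sqrt{xy}\leqslant x+y$ yields the desired bound on $2\abs{c(v,v)}$. I expect the main delicate step to be step (i)--(iv) above: correctly tracking how the face weights $\beta_F^\pm$, the diffusion bounds $k_i^\pm$, and the trace constant $C(V_h(D))$ accumulate through the Cauchy--Schwarz and the cell regrouping, so that the coercivity threshold matches the explicit $\sigma_-$ claimed in~\eqref{eq:sigma-inf} rather than a loose version of it.
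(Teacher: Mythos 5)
Your proposal is correct and follows essentially the same route as the paper: bound $c(v,v)$ by $\tfrac{1}{2}\sqrt{\sigma_-/\sigma}\,(a(v,v)+s(v,v))$ using facewise Cauchy--Schwarz, the convexity of the weighted average with $\beta_{max}$, the bound $k^+_{max}$ on $K$, the discrete trace inequality of proposition~\ref{prop:disc-trace-ineq}, the regrouping factor $N_{\mathcal{F}}$, and the lower bounds $k^-_{min}$, $\omega_{min}$, $\abs{\mathcal{F}}^+$. The only (immaterial) difference is packaging: you apply a global Cauchy--Schwarz over the face sum followed by $2\sqrt{xy}\leqslant x+y$, whereas the paper applies Young's inequality facewise with a free parameter and optimises it at the end --- both yield the same constant.
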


\begin{proof}[Proof of proposition~\ref{prop:coercivity}]
  First, let us assume
  \begin{align}
    \label{eq:constantsCaCs}
    \exists C<\frac{1}{2}, \forall v\in V_h(D),\quad   c(v,v) \leqslant C (a(v,v)+s(v,v)).
  \end{align}
  Consequently, for all $v\in V_h(D)$,
  \begin{align}
    a^{swip}(v,v)  & = a(v,v)+s(v,v)-2c(v,v) + m(v,v) \\
                   & \geqslant (1-2C) (a(v,v)+s(v,v)) + m(v,v) \\
                   & \geqslant (1-2C) \norm{v}_E^2.
  \end{align}
  Therefore, it is enough that \eqref{eq:constantsCaCs} holds with $2C = \sqrt{\frac{\sigma_-}{\sigma}}$ to prove~\eqref{eq:coercivity-swip}.
  Since~\eqref{eq:sigma-inf} would then ensue from the necessary condition $C<\frac{1}{2}$, it would complete the proof.
  
  Let us consider a face $F = \partial D_i\cap \partial D_j\in \mathcal{F}(D)$.
  We let $\alpha>0$ and, applying successively Cauchy-Schwarz's and Young's inequalities, we have that
  \begin{align}
    \int_F n\cdot \{K\nabla v\}[v] & \leqslant \norm*{n\cdot\{K\nabla v\}}_{L^2(F)} \norm*{[v]}_{L^2(F)} \\
                                   &  = \frac{\alpha}{\alpha}\norm*{n\cdot\{K\nabla v\}}_{L^2(F)} \norm*{[v]}_{L^2(F)} \\
                                   & \leqslant \frac{1}{2\alpha^{2}}\norm*{n\cdot\{K\nabla v\}}_{L^2(F)}^2 + \frac{\alpha^2}{2}\norm*{[v]}_{L^2(F)}^2.
  \end{align}
  From proposition~\ref{prop:disc-trace-ineq} and the definitions of the weighted average operator and of $k_i^+$ (in equations~\eqref{eq:weighted-avg} and~\eqref{eq:def-K-bounds}), we get
  \begin{align}
    \norm*{n\cdot\{K\nabla v\}}_{L^2(F)}^2 & = \norm*{n\cdot\mleft(\beta_F^+(K\nabla v)_{|F^+} + \beta_F^-(K\nabla v)_{|F^-}\mright)}_{L^2(F)}^2 \\
                                           & \leqslant \norm*{n\cdot\beta_F^+(K\nabla v)_{|F^+}}_{L^2(F)}^2 + \norm*{n\cdot\beta_F^-(K\nabla v)_{|F^-}}_{L^2(F)}^2 \\
                                           & \leqslant \beta_{max}^2{k^+_{max}}^2 \mleft( \norm*{(\nabla v)_{|F^+}}_{L^2(F)^d}^2 + \norm*{(\nabla v)_{|F^-}}_{L^2(F)^d}^2 \mright) \\
                                           & \leqslant C(V_h(D))^2\beta_{max}^2{k^+_{max}}^2 \mleft( \norm*{(\nabla v)_{|D_i}}_{L^2(D_i)^d}^2 + \norm*{(\nabla v)_{|D_j}}_{L^2(D_j)^d}^2 \mright)
  \end{align}
  Now we let $\epsilon>0$ and choose $\alpha=C(V_h(D))\beta_{max}{k}^+_{max}\sqrt{N_{\mathcal{F}}(\epsilon k^-_{min})^{-1}}$, so that
  \begin{align}
    \int_F n\cdot \{K\nabla v\}[v] & \begin{multlined}[t]
      \leqslant \frac{1}{2\alpha^{2}}C(V_h(D))^2\beta_{max}^2{k^+_{max}}^2 \mleft( \norm*{(\nabla v)_{|D_i}}_{L^2(D_i)^d}^2 + \norm*{(\nabla v)_{|D_j}}_{L^2(D_j)^d}^2 \mright) \\
      + \frac{\alpha}{2}\norm*{[v]}_{L^2(F)}^2     
    \end{multlined} \\
    \label{eq:bound-c}
                                   & = \frac{\epsilon k^-_{min}}{2N_{\mathcal{F}}} \norm*{(\nabla v)_{|D_i}}_{L^2(D_i)^d}^2 + \frac{N_{\mathcal{F}}}{2\epsilon k^-_{min}} C(V_h(D))^2 \beta_{max}^2 {k^+_{max}}^2 \norm*{[v]}_{L^2(F)}^2
  \end{align}
  Noting that
  \begin{gather}
    k^-_{min}\norm*{\nabla v}_{L^2(D_i)^d}^2 \leqslant k^-_{i} \norm*{\nabla v}_{L^2(D_i)^d}^2 \leqslant \int_{D_i} \nabla v \cdot K \nabla v ,
  \end{gather}
  and that\footnote{Only with periodic boundary conditions (see remark~\ref{rmk:pbc-faces}), although inequality~\eqref{eq:bound-a} is still verified without them.} $\sum_{F = D_i \cap D_j \in \mathcal{F}(D)} (p_i + p_j) = N_\mathcal{F} \sum_{i\in I} p_i $, we find
  \begin{align}
    \frac{k_{min}^-}{N_\mathcal{F}} \sum_{F = D_i \cap D_j \in \mathcal{F}(D)} \mleft( \norm*{\nabla v}_{L^2(D_i)^d}^2 + \norm*{\nabla v}_{L^2(D_j)^d}^2 \mright) & = k_{min}^- \sum_{i\in I}  \norm*{\nabla v}_{L^2(D_i)^d}^2 \\
    \label{eq:bound-a}
                                                                                                                                                                  & \leqslant {a(v,v)}.
  \end{align}
  From the definition of $\omega_{min}$ and $\abs{\mathcal{F}}^+$, we also have 
  \begin{align}
    \label{eq:bound-s}
    \frac{\sigma \omega_{min}}{\abs{\mathcal{F}}^+} \sum_{F\in \mathcal{F}(D)}\norm*{[v]}_{L^2(F)}^2 \leqslant \sum_{F\in \mathcal{F}(D)} \frac{\sigma\omega_F}{\abs{F}} \int_F [v]^2 = s(v,v).
  \end{align}
  We put together~\eqref{eq:bound-a} and~\eqref{eq:bound-s} in~\eqref{eq:bound-c} and obtain
  \begin{align}
    c(v,v) = \sum_{F\in \mathcal{F}(D)} \int_F n_F\cdot \{K\nabla v\}[v] & \leqslant  \frac{C(V_h(D))^2\beta_{max}^2{k^+_{max}}^2 N_\mathcal{F}\abs{\mathcal{F}}^{+}}{2\epsilon k^-_{min}\sigma \omega_{min}}s(v,v) + \frac{ \epsilon}{2} a(v,v) \\
                                                                         & = \frac{\sigma_-}{2\epsilon\sigma}s(v,v) + \frac{ \epsilon}{2} a(v,v) \\
                                                                         & \leqslant C_\epsilon (s(v,v) + a(v,v)),
  \end{align}
  with $C_\epsilon := \max\mleft\{ \frac{\sigma_-}{2\epsilon\sigma}, \frac{\epsilon}{2} \mright\}$. Therefore, \eqref{eq:constantsCaCs} holds with $C := \inf_{\epsilon>0} C_\epsilon = \frac{1}{2} \sqrt{\frac{\sigma_-}{\sigma}}$, which concludes the proof.
\end{proof}

The result of proposition~\ref{prop:coercivity} is of major interest in choosing a suitable value for stabilisation parameter $\sigma$: too high a value degrades the performance of the algorithm that will be presented in section~\ref{sec:low-rank-appr}, due to poor conditioning of discrete operators associated with $a^{swip}$; on the other hand, $\sigma$ must be high enough for $a^{swip}$ to be coercive.
Consequently, knowledge of lower bound $\sigma_-$ enables us to set not too low a value for $\sigma$.
However, one should keep in mind that \enquote{$\sigma>\sigma_-$} is only a sufficient condition, since $\sigma_-$ is not necessarily the lowest value above which $\sigma$ ensures coercivity.
A choice of $\sigma$ lower than $\sigma_-$ may improve the performance of the aforementioned algorithm.
Alternatively, to improve conditioning while retaining coercivity, one could replace the stabilisation form $s(v,w)$ by $\sum_{F\in \mathcal{F}(D)} \int_F \sigma_F[w][v]$, where $(\sigma_F)_{F\in\mathcal{F}(D)}$ is a set of penalisation parameters defined face-wise.
Incidentally, the weights functions $\omega$ and $\beta$ added from SIP to SWIP formulations are a way of tuning the stabilisation face-wise according to conductivity.

It should be noted that, unlike typical discontinuous Galerkin settings, there are two level of discretisation here: first the mesoscopic level, at which the domain is partitioned in \enquote{cells} and where discontinuities occur; then the microscopic level, \myie{} the mesh within each cell, which relates to $V_h(D)$.
The characteristic length of the former appears in formula~\eqref{eq:sigma-inf} as $\abs{\mathcal{F}}^+$, while the latter is accounted for in $C(V_h(D))$, whose computation is discussed below.
Section~\ref{sec:numerical-results} features examples of approximation spaces with their associated trace constant's value.

\begin{rmk}[Trace constant computation]
  \label{rmk:trace-cst-comp}
  The evaluation of the lower bound $\sigma_-$ according to formula~\eqref{eq:sigma-inf} requires the evaluation of $C(V_h(D))$ which, in turn, calls for the value of $C(V_h(D_i),F)$ for all $i\in I$ and $F\in \mathcal{F}(D_i)$.
  The evaluation of $C(V_h(D_i),F)$, defined by
  \begin{gather}
    C(V_h(D_i),F)^2 = \max\mleft\{ \frac{\norm*{\nabla v_{|F}}^2_{L^2(F)^d}}{\norm*{\nabla v}_{L^2(D_i)^d}^2} : v \in V_h(D_i), \norm*{\nabla v}_{L^2(D_i)^d}>0 \mright\},
  \end{gather}
  requires computing the maximum eigenvalue of a generalised eigenvalue problem.
  Let us assume that there exists a diffeomorphism $\xi_i$ which maps $D_i$ onto a reference domain $Y$, \myie{} $\xi_i(D_i) = Y$, and that $V_h(D_i) = \{ v \circ \xi_i : x \in D_i \mapsto v(\xi_i(x)) : v \in V_h(Y)\}$, with $V_h(Y) \subset H^1(Y)$.
  If the domains $D_i$ are obtained by translations of a particular domain $ D_{i^\star} = Y$, $i^\star\in I$, then $C(V_h(D_i),F) = C(V_h(Y),F_Y)$
  , with $F_Y = \xi_i(F)$, is independent of $i$.
  If $Y=]0,1[^d$ and $\xi_i$ is an affine transformation, \myie{} $\xi_i(x) := A_i x + b_i $ for a certain invertible matrix $A_i \in \mathbb{R}^{d\times d}$ with positive determinant and a certain vector $b_i \in \mathbb{R}^{ d}$, then
  \begin{gather}
    C(V_h(D_i),F)^2 = \max\mleft\{ \frac{\abs{F}}{\abs*{D_i}}\frac{\norm*{A_i^T \nabla v_{|F_Y}}^2_{L^2(F_Y)^d}}{\norm*{A_i^T \nabla v}_{L^2(Y)^d}^2} : v \in V_h(Y), \norm*{\nabla v}_{L^2(Y)^d}>0 \mright\} ,
  \end{gather}
  so that 
  \begin{gather}
    C(V_h(D_i),F) \leqslant \frac{\varsigma_{max}(A_i)}{\varsigma_{min}(A_i)} \frac{\abs{F}^{1/2}}{\abs*{D_i}^{1/2}} C(V_h(Y),F_Y)^2,
  \end{gather}
  where $\varsigma_{max}(A_i)$ and $\varsigma_{min}(A_i)$ are respectively the maximum and minimum singular values of $A_i$.
\end{rmk}

\begin{rmk}[$K$'s eigenvalues computation]
  \label{rmk:ki-comp}
  Evaluation of the bounds $\{k_i^-,k_i^+\}_{i\in I}$ of eigenvalues of $K$, defined in~\eqref{eq:def-K-bounds}, is required for $a^{swip}$: these bounds yield the face-wise weights $\omega$, $\beta^-$ and $\beta^+$ as expressed in~\eqref{eq:def-weights}, used in bilinear forms $s$ and $c$, and are involved in formula~\eqref{eq:sigma-inf} of lower bound $\sigma_-$.
  
  If such bounds are not explicitly given, they are evaluated numerically.
  Assuming that 
  $K(x) = \sum_{i\in N_h} K(x_i) \phi_{i}(x)$ where $\{x_i : i \in  {N}_h \}$ is the set of nodes of the mesh $\mathcal{T}_h(D_i)$, and where $\{\phi_i(x) : i\in N_h\}$ forms a partition of unity, \myie{},  are non-negative functions such that $\sum_{i\in \mathcal{N}_h(D_i)} \phi_i(x) = 1$ for all $x$, then $k_i^-=\min\{\lambda_{min}(K(x_i)) : i \in  {N}_h\}$ and $k_i^+=\max\{\lambda_{max}(K(x_i)) : i\in {N}_h\}$.
  For a general $K(x)$, it can be approximated under the above form with a sufficiently fine mesh, and $k_i^-$ and $k_i^+$ are estimated from its approximation.
  
  Although, for the sake of simplicity, we consider numerical examples with a scalar-valued diffusion operator $K\in L^\infty(D)$, there is no objection to its being matrix-valued, \myie{} $K\in L^\infty(D)^{d\times d}$.
  If $K$ is diagonal, the evaluation cost of $\{k_i^-,k_i^+\}_{i\in I}$ is insignificant---a fortiori if it is scalar.
  If $K$ is not diagonal, the extreme eigenvalues of a $d$-by-$d$ matrix must be computed at every node.
  Our simulations found this latter cost, albeit not negligible, to remain small compared to the overall resolution cost.
  A parallelisation strategy would considerably reduce the cost of these evaluations.
\end{rmk}

\section{Tensor-structured method} 
\label{sec:tensor}

\subsection{Formulation over a tensor product space}
\label{sec:tens-struct-form}

We here assume that the domains $D_i$, $i\in I$, are obtained by translations of a reference domain $Y = \xi_{i}(D_i)$, with $\xi_i(x) = x + b_i$ for a certain vector $b_i \in \mathbb{R}^{ d}$.
Then there exists a bijection $\zeta$ between $I\times Y$ and $\cup_{i\in I} D_i$ (see Figure \ref{fig:domain-separation}) given by
\begin{gather}
  \zeta(i,y) = \xi_i^{-1}(y) = y-b_i, \quad (i,y) \in I \times Y.
\end{gather}
\begin{figure}[ht]
  \centering
  \includegraphics[width=.7\linewidth]{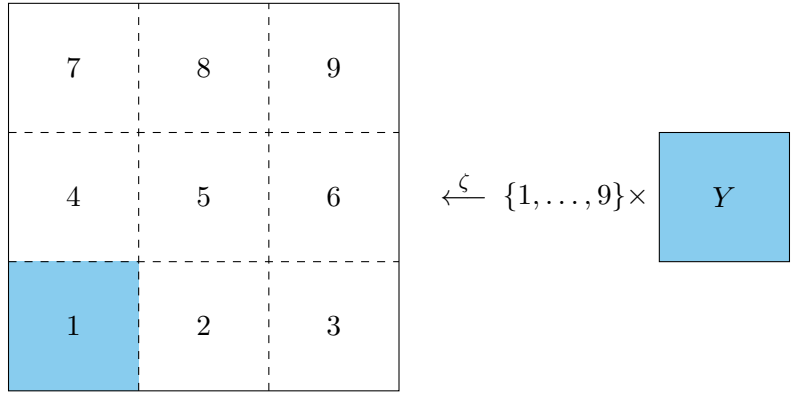}
  \caption{Bijection $\zeta$ between $I \times Y$ and $ \bigcup_{i\in I}D_i$}
  \label{fig:domain-separation}
\end{figure}

We define
\begin{gather}
  V(Y) := \mleft\{v\in H^1(Y) : (\nabla v)_{|\partial Y}\in L^2(\partial Y)^d \mright\}.
\end{gather}
Then we denote by $X = \mathbb{R}^I \otimes V(Y)$ the tensor space of functions defined on $I\times Y$ which is the linear span of elementary tensors $v^I \otimes v^Y$, with $v^I \in \mathbb{R}^I$ and $v^Y \in V(Y)$.
This tensor product space is equipped with an inner product $\langle \cdot , \cdot \rangle$ and associated norm $\norm{\cdot}$ such that
\begin{gather}
  \norm*{v^I \otimes v^Y} = \norm*{v^I}_{\mathbb{R}^I} \norm*{v^Y}_{V(Y)}.
\end{gather}
We denote by $\Upsilon$ the map which associates to a function $v : \bigcup_{i\in I} D_i \to \mathbb{R}$ the function $\Upsilon(v) = v \circ \zeta : I\times Y \to \mathbb{R}$.
This allows us to identify a function $v \in V(D)$ with a tensor $\Upsilon(v) \in X$ such that $\Upsilon(v) = \sum_{i\in I} e_i \otimes v_i^Y$, where $\{e_i\}_{i\in I}$ is the canonical orthonormal basis of $\mathbb{R}^I$, and $v_i^Y = v_{|D_i}\circ \xi_i^{-1}$.
Noting that
\begin{gather}
  \norm*{\Upsilon(v)}^2 = \sum_{i,j\in I} \langle e_i \otimes v_i^Y , e_j \otimes v_j^Y \rangle = \sum_{i\in I} \norm*{v_{i}^Y}^2_{H^1(Y)} = \sum_{i\in I} \norm*{v_{|D_i}}^2_{H^1(D_i)},
\end{gather}
we have that $\Upsilon$ defines a linear isometry between $V(D)$ and $X$, with $V(D)$ equipped with the norm $\norm{\cdot}_{V(D)}$ defined by $\norm{v}_{V(D)}^2 = \sum_{i\in I} \norm*{v_{|D_i}}^2_{H^1(D_i)}$, which is equivalent to the energy norm $\norm{\cdot}_E$.
Then $a^{swip}$ can be identified with a bilinear form on $X\times X$ of the form
\begin{gather}
  \label{aswip-repres}
  a^{swip} = \sum_{k=1}^{r_{swip}} a^I_k \otimes a^Y_k
\end{gather}
for some bilinear forms $a^I_k : \mathbb{R}^I\times\mathbb{R}^I \to \mathbb{R}$ and $a^Y_k : V(Y)\times V(Y) \to \mathbb{R}$ to be determined.
The bilinear forms $a^I_k$ are here identified with matrices in $\mathbb{R}^{I\times I}.$ Similarly, $b$ can be identified with a linear form on $X$ of the form
\begin{gather}
  \label{b-repres}
  b  = \sum_{k=1}^{r_b} b^I_k \otimes b^Y_k
\end{gather}
for some linear forms $b^I_k : \mathbb{R}^I \to \mathbb{R}$ and $b^Y_k : V(Y) \to \mathbb{R}$ to be determined.
The linear forms $b^I_k$ are identified with vectors in $\mathbb{R}^I$.
Subsequently, as with~\eqref{eq:dG-SWIP-D} the tensor representation $u\in X$ of the solution to problem~\eqref{eq:diffusion-strong} verifies $a^{swip}(u,v) = b(v)$ for all $v\in X$.

We choose a finite dimensional subspace $V_h(Y)\subset V(Y)$, as we did with $V_h(D)$ in section~\ref{sec:coercivity}.
This defines another finite dimensional subspace $X_h:= \mathbb{R}^I\otimes V_h(Y)\subset X$.
Approximation subspaces $V_h(D)$ and $X_h(D)$ are linearly isometric, and problem~\eqref{eq:dG-SWIP-D-h} is then equivalent to finding a tensor $u_h \in X_h$ such that
\begin{gather}
  \label{eq:diff-SWIP-IY}
  \forall v_h \in X_h, \quad a^{swip}(u_h,v_h) = b(v_h).
\end{gather}

For a comprehensive introduction to tensor numerical calculus and problems formulated over tensor spaces, we refer the reader to the monograph~\cite{Hackbusch2012a}.

\paragraph*{Representation of  the linear form $b$ on $X$.}
To obtain a representation of the linear form $b$ in the form \eqref{b-repres}, it is sufficient to consider the restriction of $b$ to elementary tensors.
Let us assume that the source term $f$ is such that
\begin{align}
  \label{lowrankf}
  \Upsilon({f}) = \sum_{k=1}^{r_f} f^I_k \otimes f^Y_k \;\in \mathbb{R}^I\otimes L^2(Y);
\end{align}
see remark~\ref{rmk:K-f-tensor} on this representation.
For $v \in V(D)$ such that  $\Upsilon(v) = v^I \otimes v^Y$, with $v^I\in \mathbb{R}^I$ and $v^Y \in V(Y)$, we then have 
\begin{align}
  b(v) = \sum_{i\in I}\int_{D_i} fv = \sum_{k=1}^{r_f} \sum_{i\in I} v^I(i)f^I_k(i) \int_Yf_k^Yv^Y,
\end{align}
which yields a representation of the form \eqref{b-repres} with $r_b=r_f$ and linear forms $b^I_k(v^I) = \sum_{i\in I} v^I(i)f^I_k(i)$ and $b_k^Y(v^Y) = \int_Yf_k^Yv^Y$.
Note that $b^I_k$ can be identified with the vector $f^I_k$.

\paragraph*{Representation of $a^{swip}$ on $X\times X$.}
To obtain a representation of the bilinear form $a^{swip}$ in the form \eqref{aswip-repres}, it is sufficient to consider the restriction of $a^{swip}$ to elementary tensors.
We first consider the representation of the diffusion form $a$.
Let us assume that the conductivity field $K$ is such that
\begin{align}
  \label{lowrankK}
  \Upsilon(K) = \sum_{n=1}^{r_K} K^I_n \otimes K^Y_n \;\in \mathbb{R}^I\otimes L^\infty(Y)
\end{align}
(see remark~\ref{rmk:K-f-tensor}).
Then, for any $v,w$ in $V(D)$ such that $\Upsilon(v) = v^I\otimes v^Y$ and $\Upsilon(w) =w^I\otimes w^Y$ are elementary tensors in $X$, we have
\begin{align}
  a(v,w) = \sum_{i\in I}\int_{D_i} K\nabla v\cdot\nabla w = \sum_{n=1}^{r_K}\sum_{i\in I} K_n^I(i)v^I(i)w^I(i) \int_Y K_n^Y\nabla v^Y\cdot\nabla w^Y,
\end{align}
which yields 
\begin{align}
  a  =	 \sum_{n=1}^{r_K}\diag(K^I_n)\otimes N[K^Y_n], 
\end{align}    
with $\diag(K^I_n)$ the diagonal matrix in $\mathbb{R}^{I\times I}$ with diagonal $K^I_n$, and $N[\psi]$ the bilinear form defined for $\psi\in L^\infty(Y)$ by
\begin{align}
  N[\psi](v^Y,w^Y) = \int_Y \psi\nabla v^Y \cdot\nabla w^Y.
\end{align}
In a similar way, we obtain 
\begin{multline}
  c  = \sum_{n=1}^{r_K}\sum_{q=1}^d l\mleft(\chi^q[K^I_n]^T\mright)\otimes N_0^q[K^Y_n] + l\mleft(\chi^q[K^I_n]\mright)\otimes N_0^{-q}[K^Y_n] \\
  - \chi^q[K^I_n]^T\otimes N_1^q[K^Y_n] - \chi^q[K^I_n]\otimes N_1^{-q}[K^Y_n], 
\end{multline}
and 
\begin{multline}
  s  = \sigma\sum_{q=1}^d l\mleft(\chi^q\mleft[\frac{\omega_K}{\abs*{\partial Y_q}}\mright]^T\mright)\otimes M_0^q + l\mleft(\chi^q\mleft[\frac{\omega_K}{\abs*{\partial Y_q}}\mright]\mright)\otimes M_0^{-q} \\
  - \chi^q\mleft[\frac{\omega_K}{\abs*{\partial Y_q}}\mright]^T\otimes M_1^q 
  - \chi^q\mleft[\frac{\omega_K}{\abs*{\partial Y_q}}\mright]\otimes (M_1^q)^T ,
\end{multline}
where the bilinear forms $ M_0^q$, $M_1^q$, $N_0^q[\psi]$ and $N_1^q[\psi]$ are respectively defined, for $q\in\{-d,\dots,d\}\setminus\{0\}$, by 
\begin{align}
  M_0^q(v^Y,w^Y) & = \int_{\partial Y_q} v^Y w^Y, &    M_1^q(v^Y,w^Y) & = \int_{\partial Y_q} v^Y(w^Y\circ\tau_q), 
  \\
  N_0^q[\psi](v^Y,w^Y) & = \int_{\partial Y_q} \psi\frac{e_q}{2}\cdot(\nabla v^Y)w^Y, & N_1^q[\psi](v^Y,w^Y) & = \int_{\partial Y_q} \psi\frac{e_q}{2}\cdot\nabla v^Y(w^Y\circ\tau_q),
\end{align}
with $(e_q)_{q\in\{1,\ldots,d\}}$ the canonical basis of $\mathbb{R}^d$ and $e_{-q}:=-e_q$, $\partial Y_q$ the face of $Y$ whose outward normal is $e_q$ and $\tau_q$ the translation that maps $\partial Y_q$ onto $\partial Y_{-q}$, where the matrix $\chi^q[\psi]$ is defined by
\begin{gather}
  \mleft(\chi^q[\psi]\mright)_{ij} =
  \begin{cases}
    \psi(i,j) & \text{if } \xi_i(\partial D_i \cap \partial D_j)=\partial Y_q \\
    0 & \text{else}
  \end{cases},	
\end{gather}
and where for a matrix $A \in \mathbb{R}^{I\times I}$, $l(A)$ is the diagonal matrix such that $ l\mleft(A\mright)_{ij} = \delta_{ij}\sum_{k\in I} (A)_{ik}$.
Finally, we have
\begin{align}
  m \equiv \mathds{1}^I\otimes M ,
\end{align}
with $\mathds{1}^I$ the identity matrix in $\mathbb{R}^{I\times I}$ and 
\begin{align}
  M(v^Y,w^Y)  = \int_Y v^Y w^Y.
\end{align}

\begin{rmk}[Tensor representations of $K$ and $f$]
  \label{rmk:K-f-tensor}
  The formulation of problem~\eqref{eq:diff-SWIP-IY} over tensor product space $X_h$ requires knowledge of tensor representations $\Upsilon(K)\in \mathbb{R}^I\otimes L^\infty(Y)$ and  $\Upsilon(f)\in \mathbb{R}^I\otimes L^2(Y)$, yet they are generally known as elements of $L^\infty(D)$ and $L^2(D)$, respectively.
  We showed that there is a straightforward identification of $K$   with $\sum_{k=1}^{\# I}e_k\otimes (K_{|D_k}\circ\xi_i^{-1})$, and likewise for $f$.
  This representation of $K$ involved the sum of  $\#I$ elementary tensor products, which would lead to representations of $a^{swip}$ and $b$ with an even greater number of terms, hence high storage and computational complexities.
  This would degrade the performance of the algorithm that is to be introduced in section~\ref{sec:low-rank-appr}.
  Therefore, it is desirable to look for tensor representations  in the form~\eqref{lowrankK} and~\eqref{lowrankf} with a small number of terms, \myie{}, low rank 
  $\rank(K)$ and $\rank(f)$, respectively.

  Apart from rare simple cases (such as the examples in section~\ref{sec:numerical-results}), $K$ and $f$ have full rank, so that low-rank approximations have to be introduced. 
  Such approximations can be sought by using truncated singular value decomposition or empirical interpolation method~\cite{Maday2009a}.
  Thus the ranks of $a^{swip}$ and $b$ are curbed, which improves computational efficiency.
  From the quasi-periodicity assumption, $K$ is expected to have a low rank or, at least, to admit an accurate low-rank approximation.
\end{rmk}

\subsection{Low-rank approximation}
\label{sec:low-rank-appr}

Tensor-based approaches have already been successfully used to reduce multiscale complexity, \myeg{} by exploiting sparsity in~\cite{Hoang2005a}. 
The novelty of the method presented here lies in the tensor representation designed specifically to exploit quasi-periodicity via low-rank approximation techniques.

In order to get some insight into the relation between quasi-periodicity and low-rankness,
we first note that a periodic function $v: D\to\mathbb{R}$ is such that for all $i\in I$, $v_{|D_i} = v^Y \circ \xi_i$, with $v^Y: Y\to\mathbb{R}$.
Such a function is identified with the rank-1 tensor $\Upsilon(v) = {1}_I \otimes v^Y,$ where $1_I(i)=1$ for all $i\in I$.
Let us now consider a function $v$ which coincides with a periodic function except on a subset of cells indexed by $\Lambda \subset I$.
The function $v$ is such that $v_{|D_i} = v_0^Y \circ \xi_i$ for all $i \in I\setminus \Lambda$, and $v_{|D_i} = v_i^Y \circ \xi_i$ for $i \in \Lambda$, where $v_0^Y$ and $v_i^Y$, $i\in \Lambda$, are scalar functions defined over $Y$. 
Then, $v$ can be identified with a tensor
\begin{gather}
  \Upsilon(v) = 1_{I\setminus \Lambda} \otimes v_0^Y + \sum_{i\in \Lambda} 1_{\{i\}} \otimes v_i^Y,
\end{gather}
where, for $A\subset I$, $1_A$ is such that $1_A(i) = 1$ if $i\in A$ and $0$ if $i\notin A$; the rank of this tensor is bounded by $1+\#\Lambda$.
A function which coincides with a periodic function except on a small number of cells will therefore admit a representation as a tensor with low-rank.
Figure~\ref{fig:rank2} illustrates this case for $\#\Lambda=1$.
Also, note that even if $\#\Lambda$ is large but many of the functions $v_i^Y$ are the same, then the rank may be low.
More precisely, $\rank(v) \leqslant 1+ \vdim(\vspan\{v_i\}_{i\in \Lambda})$.
We expect that the solution of~\eqref{eq:diff-SWIP-IY}, for a quasi-periodic medium and for some right-hand sides, will admit an accurate approximation with such a function.

\begin{figure}
  \centering
  \includegraphics[width=0.4\linewidth,height=0.4\linewidth]{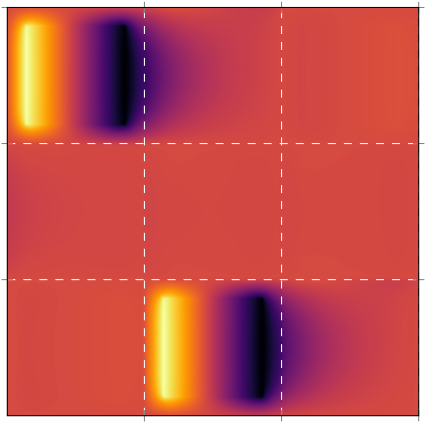}
  \caption{Example of rank-2 function}
  \label{fig:rank2}
\end{figure}

In order to build a low-rank approximation of the solution of~\eqref{eq:diff-SWIP-IY}, various algorithms are available in the literature; the reader may consult surveys~\cites{Khoromskij2012a}{Grasedyck2013a} for a presentation of existing methods.
We here rely on an adaptive algorithm detailed in~\cite{Nouy2017b}, which we will outline below. 
Let $J$ be the convex functional given by
\begin{gather}
  J(v) = \frac{1}{2}a^{swip}(v,v) - b(v,v),
\end{gather}
whose unique minimiser over $X_h$ is the solution $u$ of~\eqref{eq:diff-SWIP-IY}.
This algorithm constructs a sequence of approximations $(u_n)_{n\geqslant 1}$ with increasing rank, starting with $u_0=0$.
At each step $n\geqslant 1$, it proceeds as follows.
A rank-one correction $u^I_n\otimes u^Y_n$ of $u_{n-1}$ is first computed by solving the optimisation problem
\begin{gather}
  \label{eq:greedy-correction}
  \min_{v^I \in \mathbb{R}^I , v^Y \in V_h(Y)} J(u_{n-1}+v^I\otimes v^Y).
\end{gather}
In practice, we perform a few iterations of an alternating minimisation algorithm which consists in minimising alternatively over $u^I$ and $u^Y$.
This first step yields an approximation $ u_{n}$ of the form $u_n = \sum_{k=1}^{n} u^I_k\otimes u^Y_k.$ Then we compute the Galerkin projection of the solution in $\mathbb{R}^I \otimes U_n$, with $U_n = \vspan\{u^Y_1,\ldots, u^Y_n\},$ which is solution of
\begin{gather}
  \label{eq:greedy-updateY}
  \min_{v \in \mathbb{R}^I \otimes U_n} J(v).
\end{gather}
This is equivalent to updating the functions $u^I_k$ in the representation of $u_n$ by minimising $J(u_n)$ over the functions $u^I_k$ with fixed functions $u^Y_k$.
Finally, we compute the Galerkin projection of the solution in $S_n \otimes V_h(Y)$, with $S_n = \vspan\{u^I_1,\ldots, u^I_n\}$, which is solution of
\begin{gather}
  \label{eq:greedy-updateI}
  \min_{v \in S_n \otimes V_h(Y)} J(v).
\end{gather}
This is equivalent to updating the functions $u^Y_k$ in the representation of $u_n$ by minimising $J(u_n)$ over the functions $u^Y_k$ for fixed functions $u^I_k$.
Finally, we stop the algorithm when the residual error criterion
\begin{gather}
  \label{eq:residual-error}
  \frac{\norm*{a^{swip}(u_n,\cdot) - b}_{X_h'}}{\norm*{b}_{X_h'}} \leqslant \text{tolerance}
\end{gather}
is verified.
This method is a particular case of one of the class of algorithms whose convergence analysis can be found in~\cite{Falco2012}.

We may give some insight into the complexity reduction through problems sizes.
A direct resolution of~\eqref{eq:dG-SWIP-D} requires the solution of a linear system of size $\#I\times \vdim(V_h(Y))$.
One step of the proposed algorithm requires the alternate solution of problems of size $\# I$ and $\dim(V_h(Y))$ in the rank-one correction step, the solution of one problem of size $n \times \#I$ and finally, the solution of one problem of size $n\times \dim(V_h(Y))$.
The cost of one iteration therefore increases with $r$ but for moderate ranks $n$, it remains small compared to a direct solution method.
Note also that compared to a direct solution method, the tensor-structured approach may allow a significant reduction in the storage of the operator.

\section{Numerical results}
\label{sec:numerical-results}

The proposed multiscale low-rank approximation method, here denoted MsLRM, has been tested on two-dimensional problems with quasi-periodic diffusion operator $K$ of the form
\begin{gather}
  \label{eq:test-K-gen}
  \Upsilon(K) = B \otimes K^Y_1 + (1-B) \otimes K^Y_2,
\end{gather}
where the $\{B(i) : i \in I\}$ are independent and identically distributed Bernoulli random variables with values in $\{0,1\}$.
This means that $K$ is a random function whose restriction to any cell $D_i$ is $K^Y_1$ if $B(i)=1$, and $K^Y_2$ if $B(i)=0$. 
The conductivity field $K$ can be interpreted as a random perturbation of an ideal periodic medium, where a cell $D_i$ displays the material property of the reference periodic medium if $B(i)=1$, and a \enquote{perturbed} property if $B(i)=0$.
This arbitrary interpretation means that $K^Y_1$ represents the conductivity of a \emph{sound} cell and $K^Y_2$ the conductivity of a \emph{faulty} one.
Since the  $\{B(i) : i \in I\}$ are identically distributed, the defect probability is the same for every cells and we note it $p:=\mathbb{P}(B(i)=0)$.

The source term chosen is the same for all experiments and was inspired by corrector problems in stochastic homogenisation~\cite{Anantharaman2011a}.
We define it over $D$ as
\begin{gather}
  \label{eq:src-corr1}
  f = \nabla\cdot(Ke_1),
\end{gather}
where the choice of direction $e_1$ is arbitrary.
The boundary conditions remain periodic.

\begin{table}
  \centering
  \caption{Default parameters}
  \begin{tabular}{cccc}\toprule
    $\vdim(V_h(Y))$ & Tolerance & $p$  & $\sup(K)/\inf(K)$ \\\midrule
    \num{441}           & \num{e-3}          & \num{.1} & \num{100}             \\\bottomrule
  \end{tabular}
  \label{tab:param-ref}
\end{table}

We choose an approximation space $V_h(Y)$ of continuous, piecewise affine functions\footnote{Those are piecewise Lagrange polynomials of degree at most 1.} based on a mesh of isoparametric quadrangle elements.
This mesh is a regular grid of $20\times20$ elements; figure~\ref{fig:mesh-cell} shows an isotropic example for $Y:=]0,1[^d$.
The associated trace constant $C(V_h(Y))\approx 7$ (unless specified otherwise), computed accordingly to remark~\ref{rmk:trace-cst-comp}.
For comparison, we use as a reference method a standard continuous Galerkin finite element method with an approximation space $V_{h,per}^c(D) = V_h(D)\cap C^0(D) \cap H^1_{per}(D)$ (continuous and periodic functions in $V_h(D)$).

Unless specified otherwise, the parameters default values given in table~\ref{tab:param-ref} apply.

\begin{rmk}[Approximation spaces' dimensions compared]
  \label{rmk:dim-comparison}
  Where elements of $V_{h,per}^c(D)$ are concerned, each cell is meshed as $Y$ is (see an example on figure~\ref{fig:mesh-fem}) and therefore  $\vdim(V_{h,per}^c(D))$ is of the same order as $\vdim(V_h(D))$.
  More precisely, $\vdim(V_{h,per}^c(D)) < \vdim(V_h(D))$ because of the continuity constraints at cell interfaces---including half of external faces, due to periodic boundary conditions.
  Those cell interfaces are outlined on the example of figure~\ref{fig:mesh-fem}; each node located along those lines would have one more degree of freedom in $V_h(D)$ than in $V_{h,per}^c(D)$.
  For example, a square domain of \num{1024} cells with $\vdim(V_h(Y))=\num{441}$ yields $\vdim(V_{h,per}^c(D))=\num{410881}$, whereas $\vdim(V_h(D))=\num{451584}$.
\end{rmk}

\begin{figure}
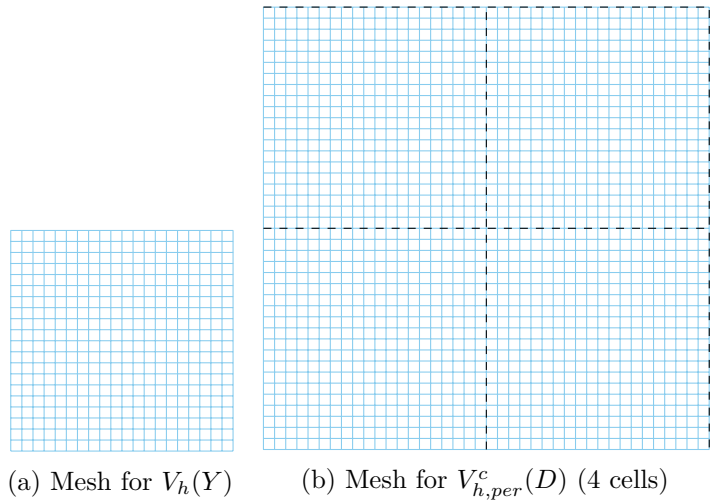

  \centering
  \begin{subfigure}[b]{.2\linewidth}
    \includegraphics[width=\linewidth]{mesh_cell.tikz}
    \caption{Mesh for $V_h(Y)$}
    \label{fig:mesh-cell}
  \end{subfigure}
  ~
  \begin{subfigure}[b]{.4\linewidth}
    \includegraphics[width=\linewidth]{mesh_multi-cells.tikz}
    \caption{Mesh for $V_{h,per}^c(D)$ (\num{4} cells)}
    \label{fig:mesh-fem}
  \end{subfigure}
  \caption{Meshes comparison between MsLRM and FEM}
  \label{fig:meshes-comparison}
\end{figure}

All computations were run on the same workstation, \myviz{} a Dell\texttrademark{} Optiplex\texttrademark{} 7010 with:
\begin{itemize}
\item \SI{8}{\gibi\byte}\footnote{\num{5.5} to \num{6.5} of which were usually available for the simulations.} ($2\times4$) RAM DDR3 \SI{1600}{\mega\hertz};
\item Intel\textregistered{} Core\texttrademark{} i7-3770 CPU: 4 cores at \SI{3.40}{\giga\hertz} with 2 threads each.
\end{itemize}

\subsection{Various conductivity patterns}
\label{sec:cond-patterns}

Here we compare the computational time between FEM and MsLRM on three test cases.
These differ by their diffusion operator $K$, reference cell $Y$ and connectivity between cells.
For each case, the comparison spans three values of $\#I$ (\myviz{} \numlist{25;100;225}) to give an small insight into computational cost sensitivity to an increase in number of cells.

\subsubsection*{Missing fibres}

This test case was directly inspired by composite materials with unidirectional fibre reinforcements.
The mesoscopic mesh is also unidirectional since every cell spans the entire width of the domain, with $Y:=]0,1[\times ]0,5[$ and $D:=[0,\#I]\times[0,5]$.
Fibre and matrix both have uniform conductivities and the faulty cells have no fibre, thus $K$ is expressed as~\eqref{eq:test-K-gen} with $K_1^Y = 1 + 99\chi$ and $K_2^Y = 1$, where $\chi\in C^0(Y,[0,1])$ is the continuous indicator function of the fibre, \myie{} $[0.25,0.75]\times]0,1[$.
An example of such conductivity with five cells, of which the middle one is faulty, is displayed on figure~\ref{fig:k-missing-fibres}.

$Y$ is meshed with the same number of elements as on figure~\ref{fig:mesh-cell}, resulting in an anisotropic mesh.
We thus keep $\vdim(V_h(Y))$ at the value in table~\ref{tab:param-ref} but, due to these particular cell size and mesh, the trace constant here is $C(V_h(Y))\approx 4.47$.

The results are shown in table~\ref{tab:missing-fibres}.
Only a rank 3 is required to reach the desired precision and therefore we hardly see any effect of the increase in domain size on computational time.
These results are mainly due to the unidimensionality of the mesoscopic mesh.
A faulty cell essentially affects the solution in the two neighbouring cells, as is visible on figures~\ref{fig:approx-missing-fibres} and~\ref{fig:sol-missing-fibres} which display the reference FEM solution and its MsLRM approximation for the conductivity from figure~\ref{fig:k-missing-fibres}.

For the FEM resolution, we observe a more significant increase in computational time as $\#I$ increases.

\begin{figure}
  \centering
  \begin{subfigure}{.4\linewidth}
    \centering
    \includegraphics[width=\textwidth]{missing-fibres-cond.tikz}
    \caption{Conductivity}
    \label{fig:k-missing-fibres}
  \end{subfigure}
  ~  
  \begin{subfigure}{.4\linewidth}
    \centering{}
    \includegraphics[width=\textwidth,height=\textwidth]{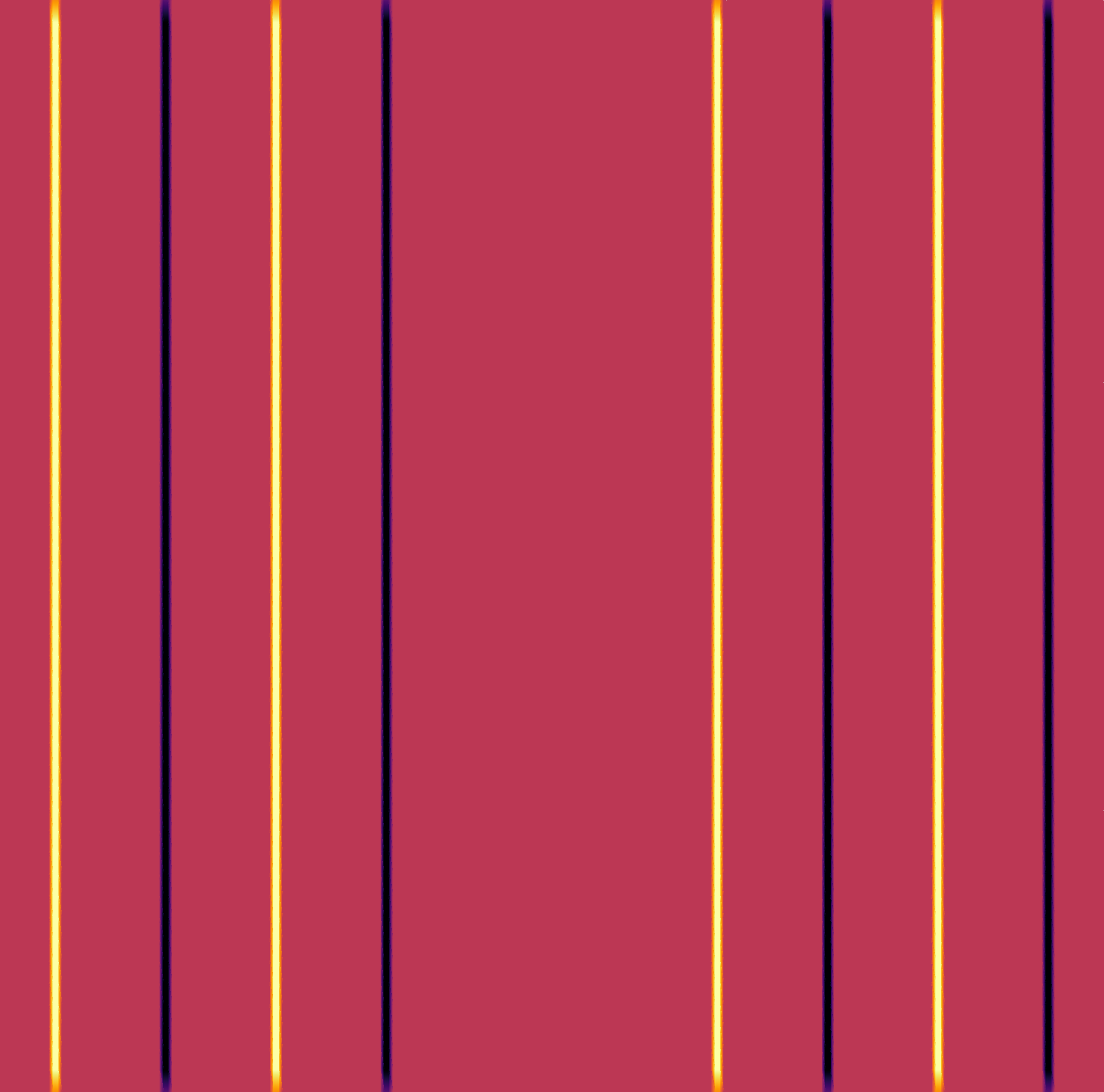}
    \caption{Source term}
  \end{subfigure}

  \begin{subfigure}{.4\linewidth}
    \centering{}
    \includegraphics[width=\textwidth,height=\textwidth]{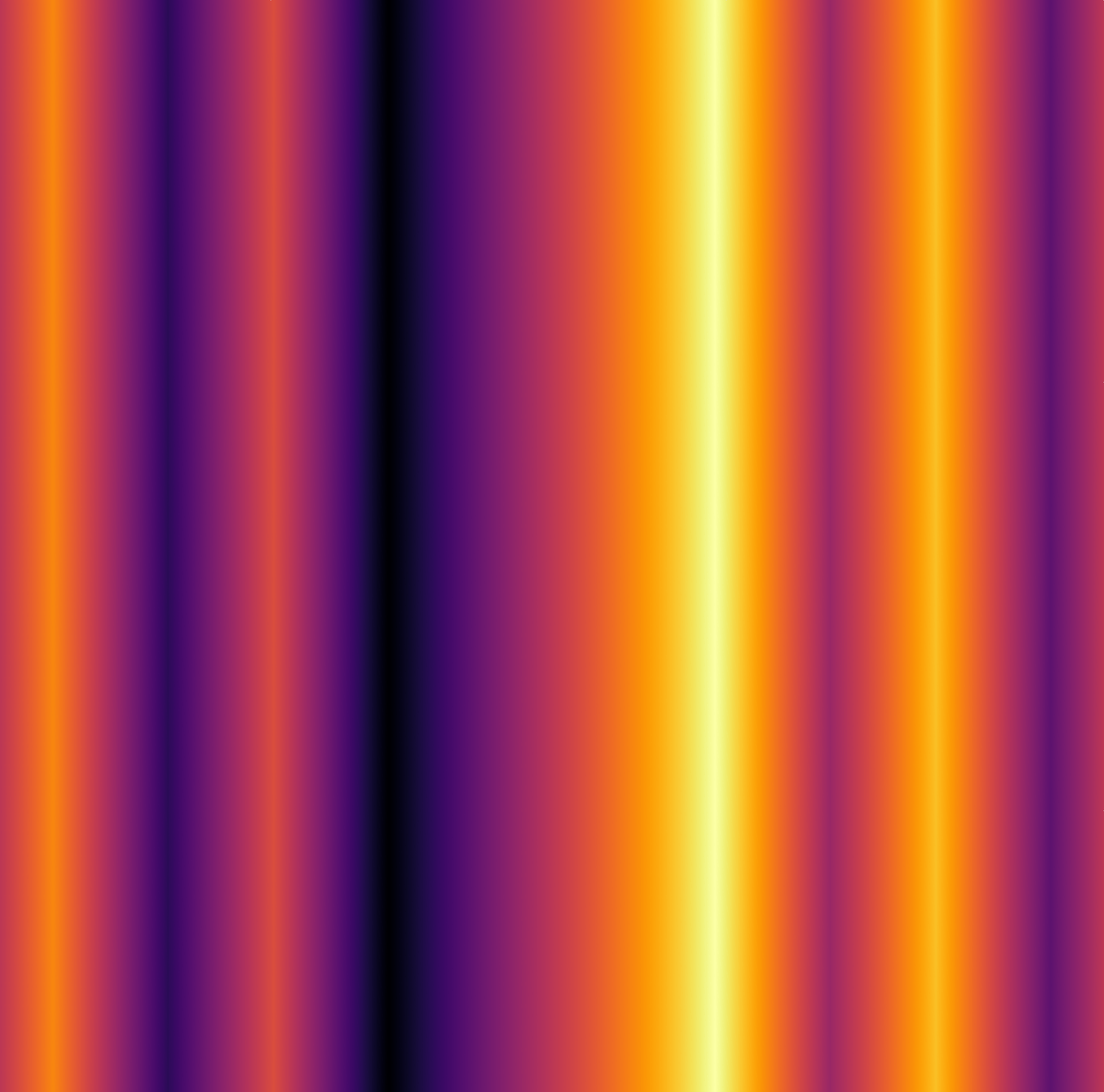}
    \caption{Rank-3 approximation}
    \label{fig:approx-missing-fibres}
  \end{subfigure}
  ~
  \begin{subfigure}{.4\linewidth}
    \centering{}
    \includegraphics[width=\textwidth,height=\textwidth]{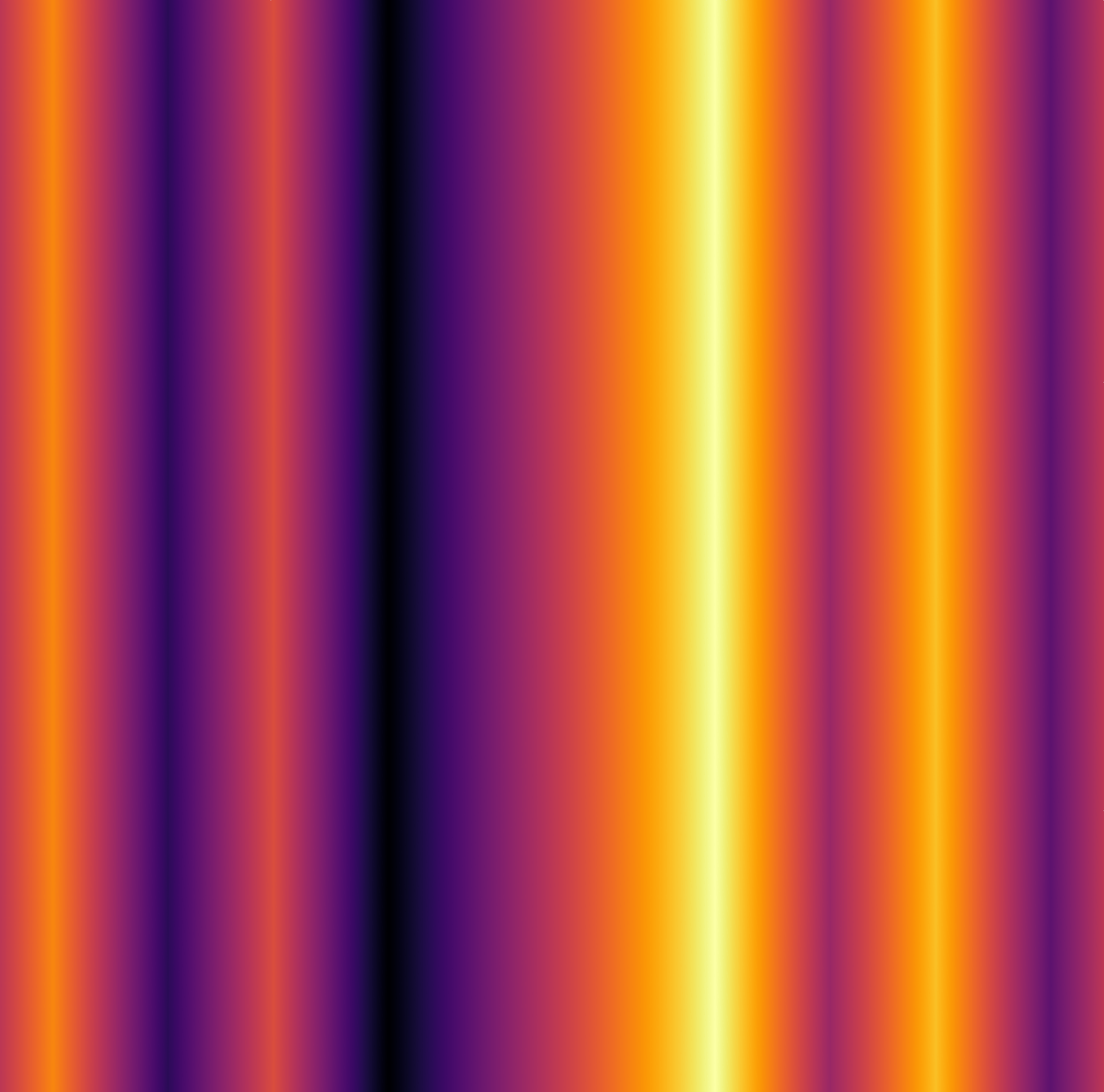}
    \caption{Reference solution}
    \label{fig:sol-missing-fibres}
  \end{subfigure}
  \caption{Missing fibres test case: example with five cells}
  \label{fig:missing-fibres}
\end{figure}

\begin{table}
  \centering
  \caption{Missing fibres test case}
  \begin{tabular}{*{4}{S}}\toprule
    {\multirow{2}*{$\#I$}} & {FEM}      & \multicolumn{2}{c}{MsLRM} \\\cmidrule(l){3-4}
                           & {time (s)} & {time (s)} & {rank}       \\\midrule
    25                     & 1.3        & 0.57       & 3            \\
    100                    & 9          & 0.60       & 3            \\
    225                    & 35         & 0.56       & 3            \\\bottomrule
  \end{tabular}
  \label{tab:missing-fibres}
\end{table}

\subsubsection*{Undulating fibres}

This test was inspired by woven composite materials.
Unlike the previous test, there are fibres in two orthogonal directions and the mesoscopic mesh is bidimensional with reference cell $Y:=]0,1[^d$.
Faulty cells show an undulation in a fibre, as illustrated on figure~\ref{fig:k-undulating-crosses}.
Consequently, $K$ is expressed as in~\eqref{eq:test-K-gen} with $K_1^Y = 1 + 99\chi_1$ and $K_2^Y = 1 + 99\chi_2$, where $\chi_1,\chi_2\in C^0(Y,[0,1])$ are indicator functions of the straight cross and cross with bent fibre, respectively; the crosses' arms have a width of $1-2^{-1/2}$ so that they occupy half the surface.

The results in table~\ref{tab:undulating-crosses} show that, compared to the first test case, a higher rank of approximation is necessary to achieve the same precision.
This is mainly due to the bidimensionality of the mesoscopic mesh: each cell has eight neighbours, whereas it had only two in the first test case.
The impact of a defect requires more functions in $V_h(Y)$ to be represented.
Reference solution and its approximation are displayed on figures~\ref{fig:sol-undulating-fibres} and~\ref{fig:approx-undulating-fibres}.

Consequently, the computational time is more affected by an increase in the number of cells.
This increase in computational time remains, however, considerably smaller  than that of the reference solution method.

\begin{figure}
  \centering
  \begin{subfigure}{.4\linewidth}
    \centering
    \includegraphics[width=\textwidth]{undulating-crosses-cond.tikz}
    \caption{Conductivity}
    \label{fig:k-undulating-crosses}
  \end{subfigure}
  ~ 
  \begin{subfigure}{.4\linewidth}
    \centering{}
    \includegraphics[width=\textwidth,height=\textwidth]{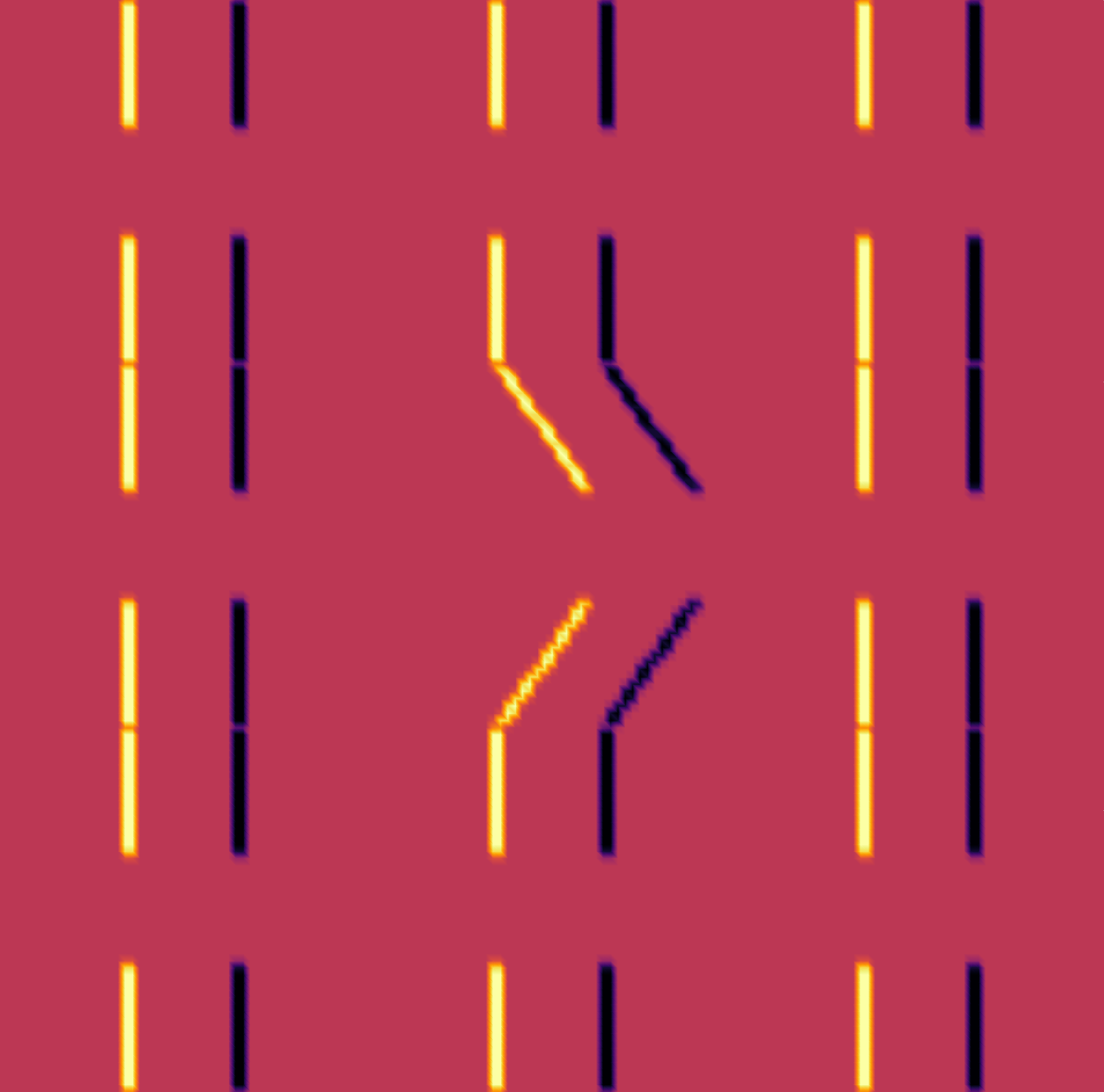}
    \caption{Source term}
  \end{subfigure}

  \begin{subfigure}{.4\linewidth}
    \centering{}
    \includegraphics[width=\textwidth,height=\textwidth]{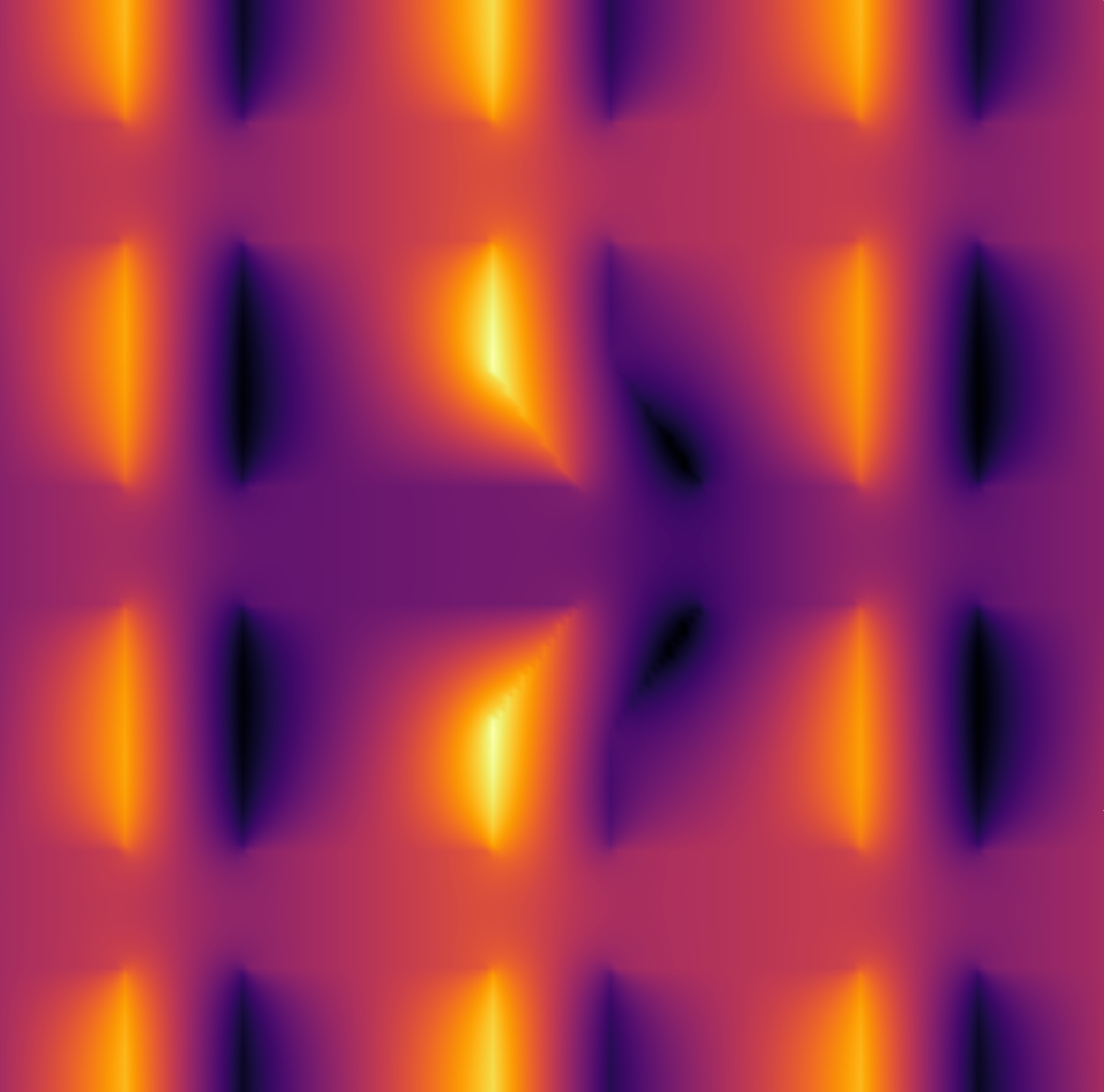}
    \caption{MsLRM approximation}
    \label{fig:sol-undulating-fibres}
  \end{subfigure}
  ~
  \begin{subfigure}{.4\linewidth}
    \centering{}
    \includegraphics[width=\textwidth,height=\textwidth]{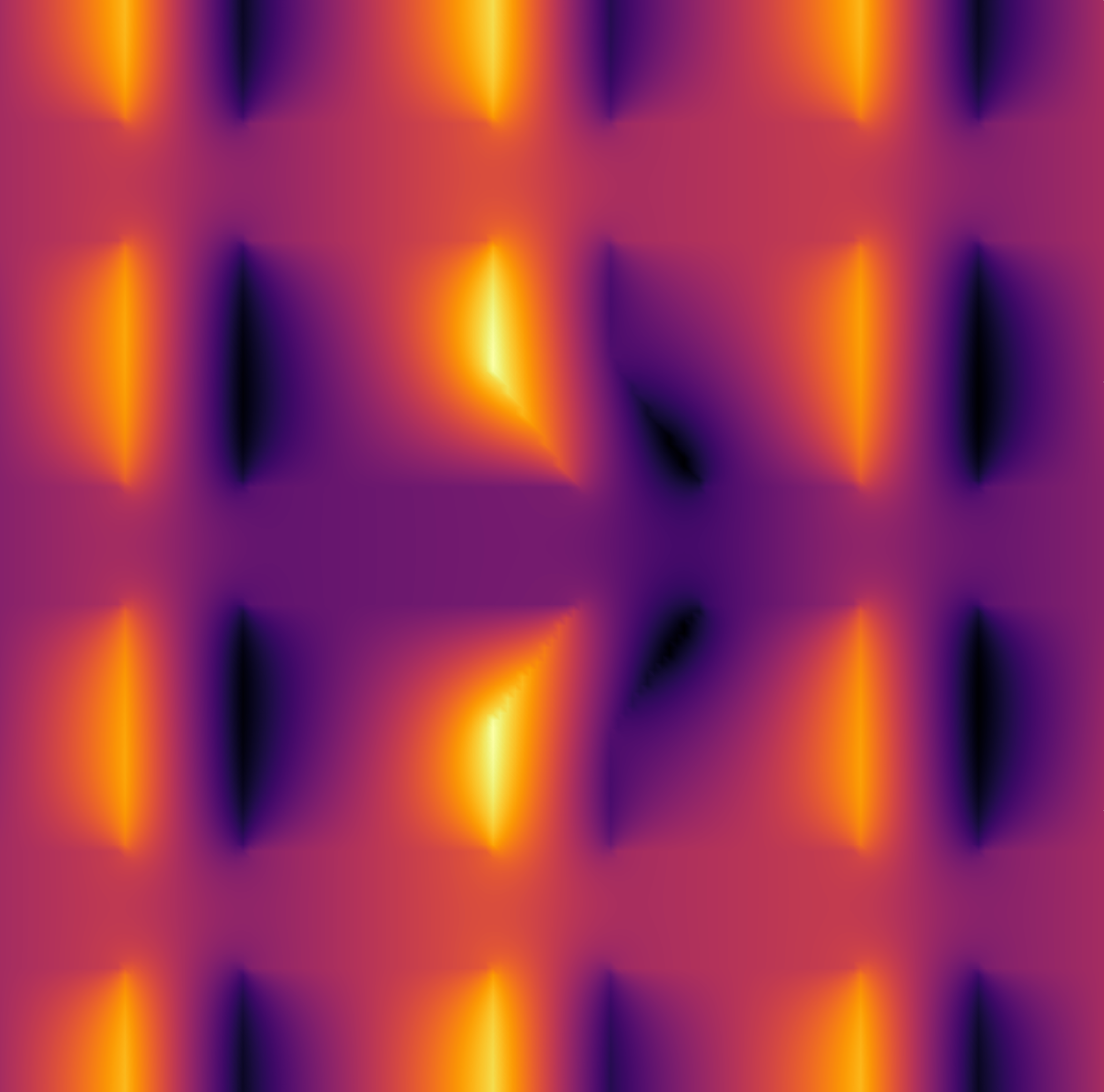}
    \caption{Reference solution}
    \label{fig:approx-undulating-fibres}
  \end{subfigure}
  \caption{Undulating fibres test case: example with nine cells}
\end{figure}

\begin{table}
  \centering
  \caption{Undulating fibres test case}
  \begin{tabular}{*{4}{S}}\toprule
    {\multirow{2}*{$\#I$}} & {FEM}      & \multicolumn{2}{c}{MsLRM} \\\cmidrule(l){3-4}
                           & {time (s)} & {time (s)} & {rank}       \\\midrule
    25                     & 1.3        & 2.16       & 14           \\
    100                    & 9          & 5.1        & 20           \\
    225                    & 35         & 6.4        & 21           \\\bottomrule
  \end{tabular}
  \label{tab:undulating-crosses}
\end{table}

\subsubsection*{Missing inclusions}

This test, sketched in figure~\ref{fig:k-missing-inclusions}, echoes the example shown in figures~\ref{fig:cookies} and~\ref{fig:meso-mesh}: a square inclusion is present in sound cells and absent from faulty ones.
Therefore, $Y:=]0,1[^d$ and $K$ is expressed as~\eqref{eq:test-K-gen} with $K_1^Y = 1 + 99\chi$ and $K_2^Y = 1$, where $\chi\in C^0(Y,[0,1])$ is the continuous indicator function of the square inclusion, \myie{} $[(2-\sqrt{2})/4,(2+\sqrt{2})/4]^2$; the square's dimensions were chosen so as to have the same occupied surface in sound cells as the undulating fibres test case.

The results are shown in table~\ref{tab:missing-inclusions}.
The slight difference between this case and the previous one can only be ascribed to the change in conductivity pattern, since both are bidimensional at the mesoscopic scale.
Although this case has a higher conductivity contrast between sound and faulty cells than the previous one, it shows significant complexity reduction compared with the reference method.

For the sake of consistency, we retain only this conductivity pattern for the following experiments in sections~\ref{sec:comp-time-size}--\ref{sec:rank-precision}.

\begin{figure}
  \centering
  \begin{subfigure}{.4\linewidth}
    \centering
    \includegraphics[width=\textwidth]{missing-squares-cond.tikz}
    \caption{Conductivity}
    \label{fig:k-missing-inclusions}
  \end{subfigure}
  ~  
  \begin{subfigure}{.4\linewidth}
    \centering{}
    \includegraphics[width=\textwidth,height=\textwidth]{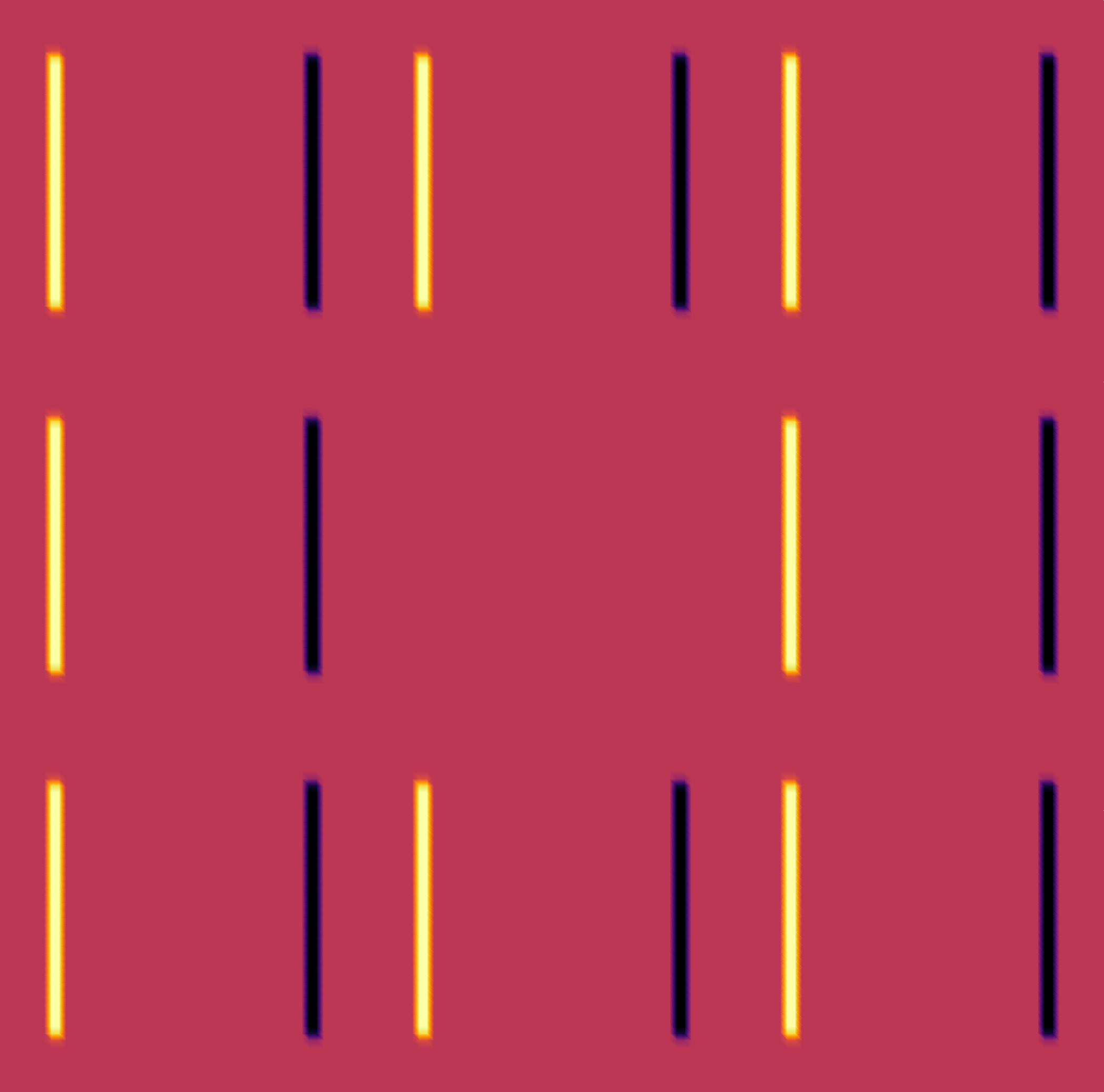}
    \caption{Source}
  \end{subfigure}

  \begin{subfigure}{.4\linewidth}
    \centering{}
    \includegraphics[width=\textwidth,height=\textwidth]{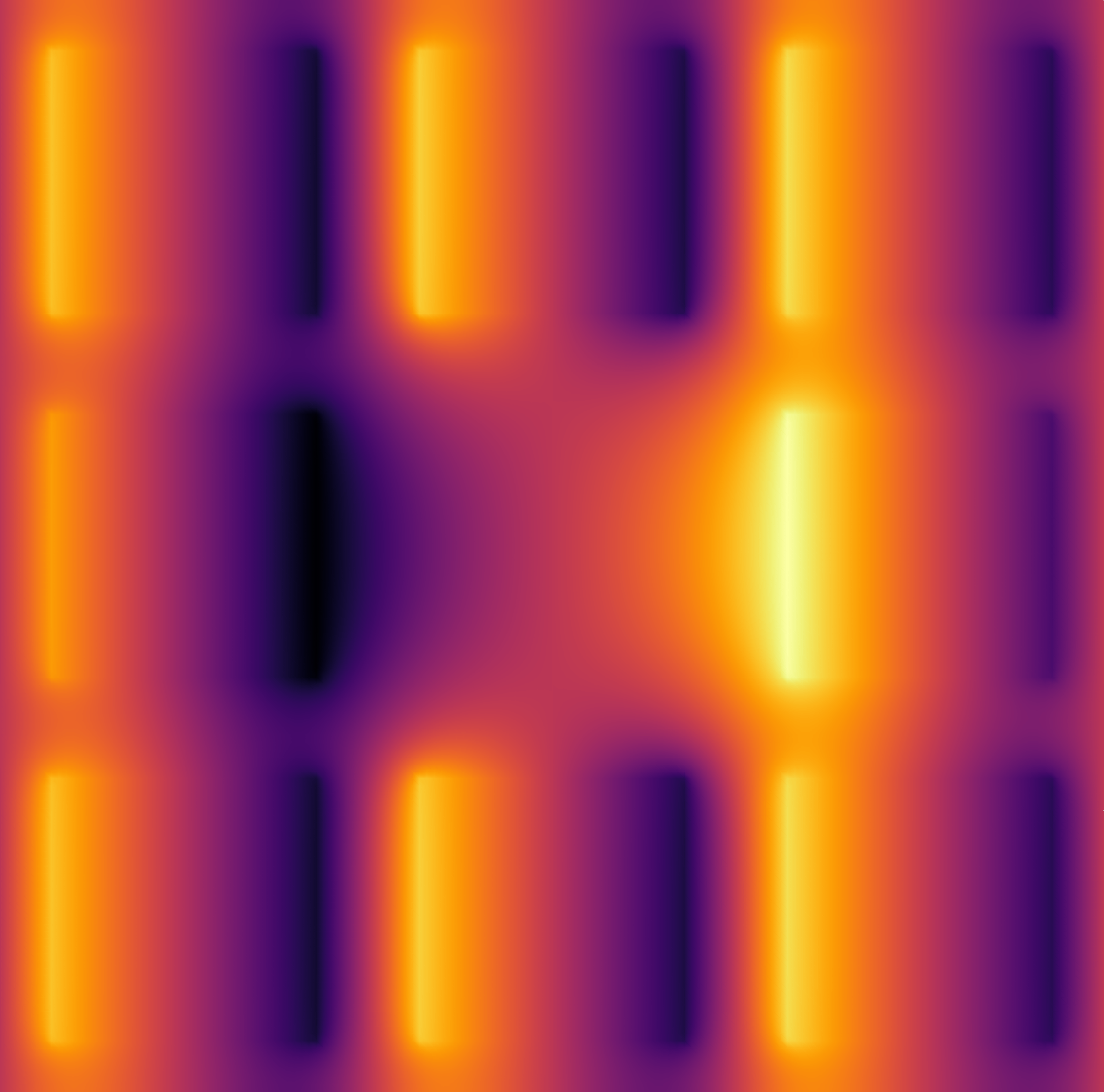}
    \caption{MsLRM approximation}
  \end{subfigure}
  ~
  \begin{subfigure}{.4\linewidth}
    \centering{}
    \includegraphics[width=\textwidth,height=\textwidth]{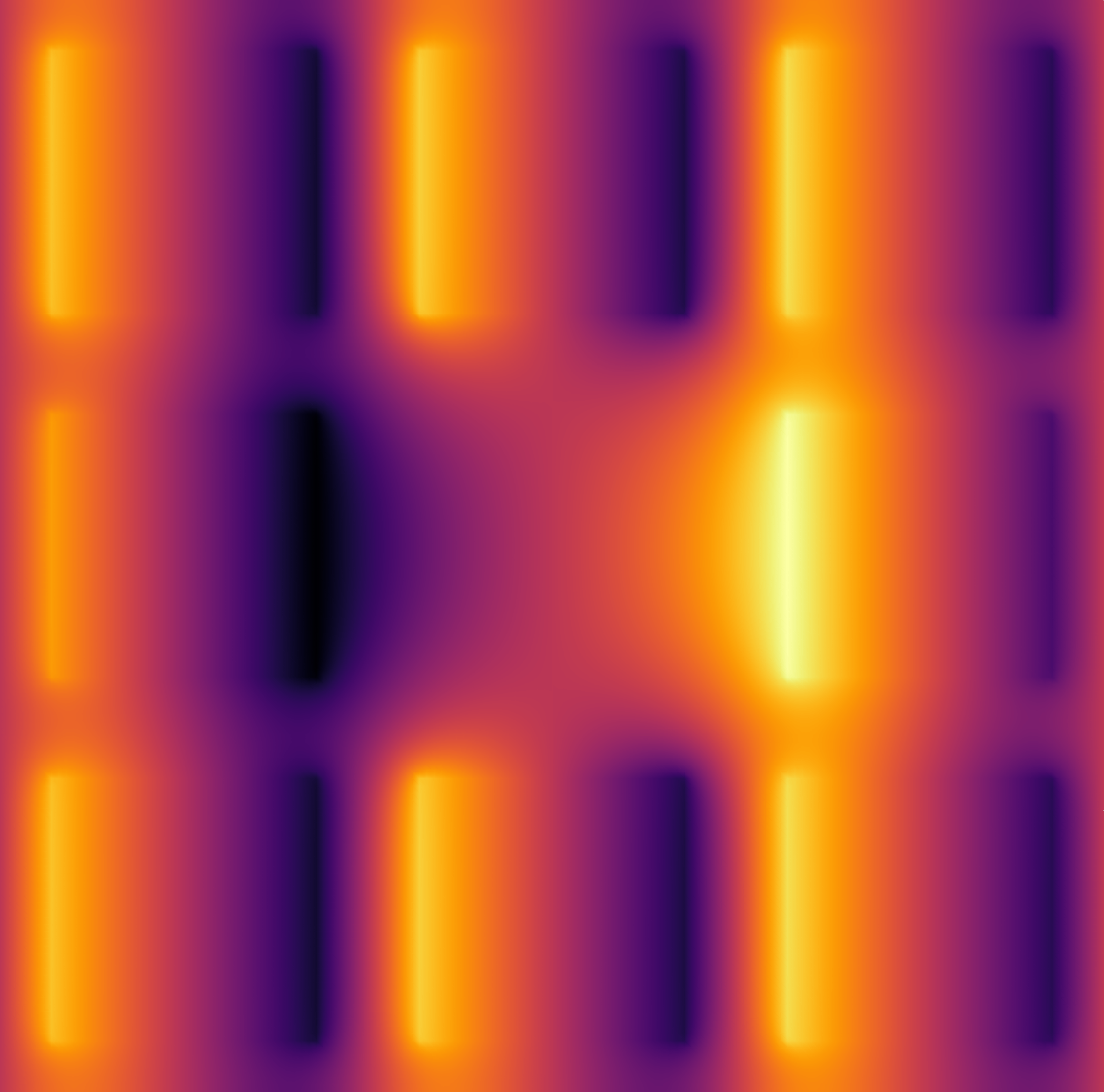}
    \caption{Reference solution}
  \end{subfigure}
  \caption{Missing inclusions test case: example with nine cells}
\end{figure}

\begin{table}
  \centering
  \caption{Missing inclusions test case}
  \begin{tabular}{*{4}{S}}\toprule
    {\multirow{2}*{$\#I$}} & {FEM}      & \multicolumn{2}{c}{MsLRM} \\\cmidrule(l){3-4}
                           & {time (s)} & {time (s)} & {rank}       \\\midrule
    25                     & 1.43       & 1.25       & 10           \\
    100                    & 9.46       & 3.6        & 17           \\
    225                    & 36.2       & 3.8        & 17           \\\bottomrule
  \end{tabular}
  \label{tab:missing-inclusions}
\end{table}

\subsection{Influence of domain size and source term}
\label{sec:comp-time-size}

The three initial tests of section~\ref{sec:cond-patterns} gave hints on the complexity reduction of the low-rank approximation method compared to a direct solution method.
To get a better insight into this, we observed the computational time of missing inclusions problems for a larger range of values of $\#I$, which resulted in figure~\ref{fig:time-cells-fem}.
The difference in complexity is made obvious.

These results were obtained with a quasi-periodic source term given by equation~\eqref{eq:src-corr1}.
We investigated the influence of the source term by running identical computations with two other source terms.
The first one is a uniform source term which smooths defects influence.
The second one is a centred peak given for $x \in D$ by
\begin{gather}
  \label{eq:src-peak}
  f(x) = e^{-10\norm{x-\theta}},
\end{gather}
where $\theta$ is the centre of $D$.
It is chosen so as to break periodicity.

As expected, figure~\ref{fig:time-cells_src} shows that with the uniform source term, the solution is quasi-periodic and the proposed method yields a high complexity reduction.
The peak source term problems have a higher complexity, as far as the MsLRM is concerned.
However, we see from figure~\ref{fig:rank-cells} that the approximation rank is bounded even in this latter case: there is still an underlying structure to the solution that allows an accurate approximation with low rank regardless of the domain size.

\begin{figure}
  \centering
  \includegraphics[width=\textwidth,axisratio=2]{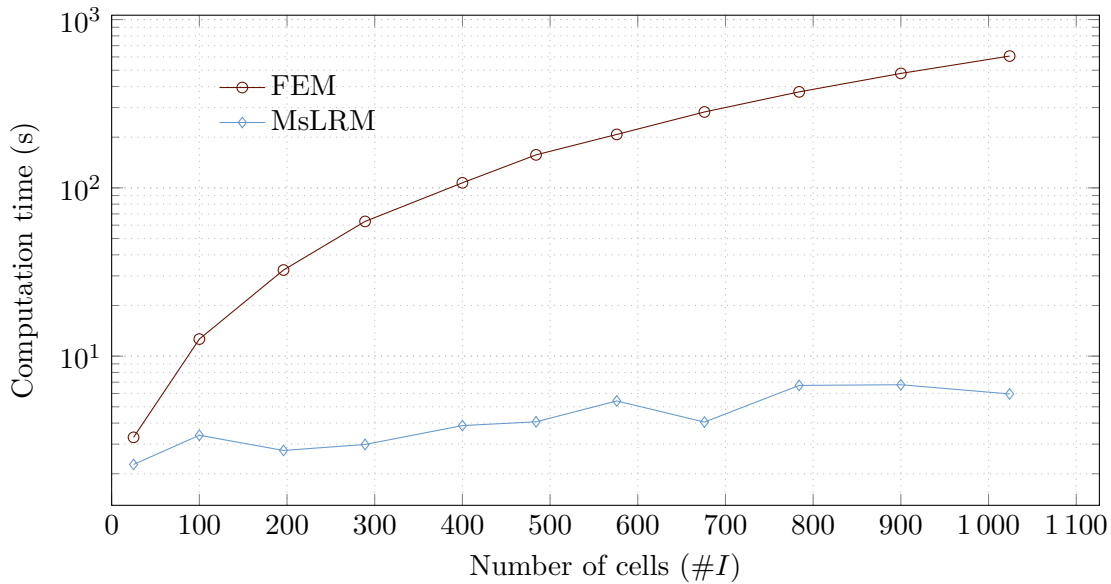}
  \caption{Domain size influence on MsLRM compared to FEM}
  \label{fig:time-cells-fem}
\end{figure}

\begin{figure}
  \centering{}
  \begin{subfigure}{\linewidth}
    \centering
    \includegraphics[width=.8\textwidth,axisratio=2]{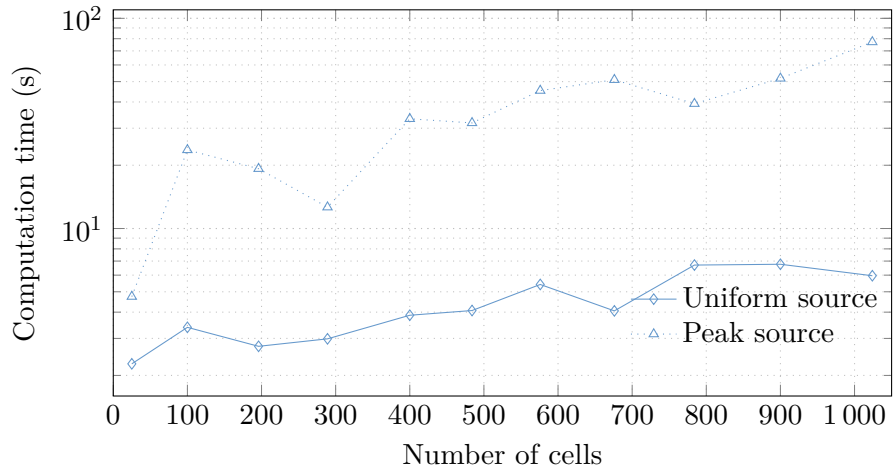}
    \caption{Impact on computational cost}
    \label{fig:time-cells_src}
  \end{subfigure}
  \\
  \begin{subfigure}{\linewidth}
    \centering 
    \includegraphics[width=.8\textwidth,axisratio=2]{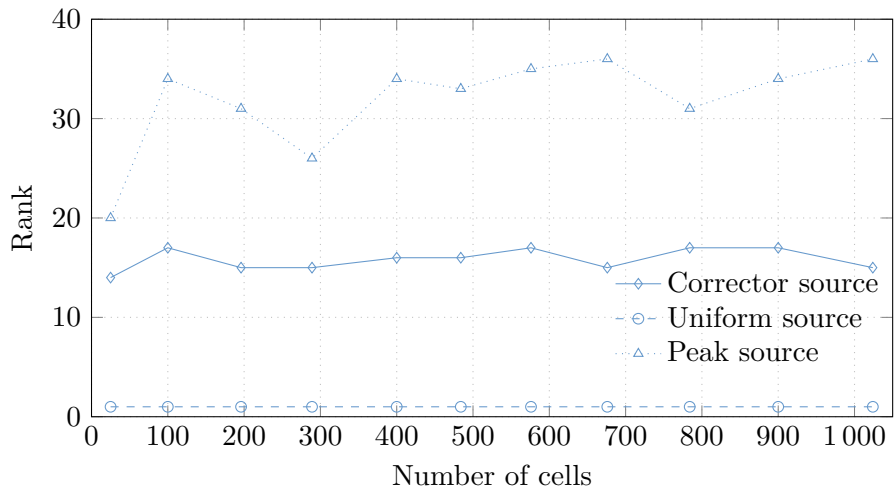}
    \caption{Impact on approximation rank}
    \label{fig:rank-cells}
  \end{subfigure}
  \caption{Source term influence on MsLRM}
\end{figure}

\subsection{Influence of the probability of defects}
\label{sec:proba-rank}

On the previous tests, the approximation rank as a function of the number of cells $\#I$ seemed to rapidly reach a plateau.
This was most obvious on figure~\ref{fig:rank-cells}.
One interpretation, illustrated on figure~\ref{fig:ranks-wrt-defects}, is that new patterns in the solution are caused by new configurations of defects, which increase the approximation rank for a given tolerance.
This plateau is a consequence of the medium's ergodicity: the larger the domain, the higher the probability to observe every possible configuration.
The number of cells before reaching the plateau depends on a number of parameters: the rank of the conductivity field (related to the number of cell types), the area of influence of a defect
(\mycf{} figures~\ref{fig:rank-wrt-defects_1}--\ref{fig:rank-wrt-defects_2} and \ref{fig:rank-wrt-defects_3}--\ref{fig:rank-wrt-defects_4}), and the probability of a defect.

\begin{figure}
  \centering
  \begin{subfigure}{.3\linewidth}
    \centering{}
    \includegraphics[width=\linewidth,height=\textwidth]{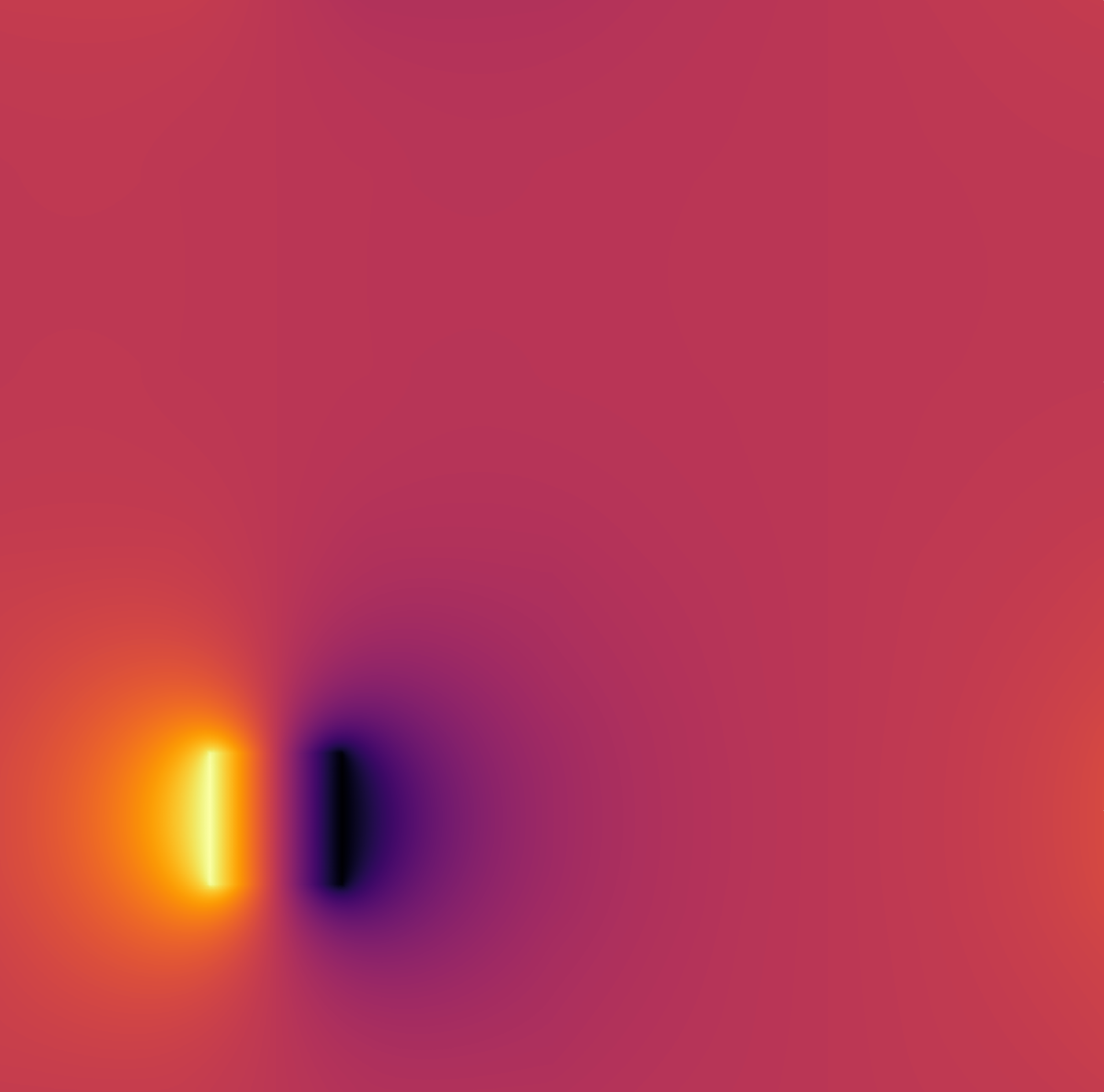}
    \caption{Rank 10}
    \label{fig:rank-wrt-defects_1}
  \end{subfigure}
  ~
  \begin{subfigure}{.3\linewidth}
    \centering{}
    \includegraphics[width=\linewidth,height=\textwidth]{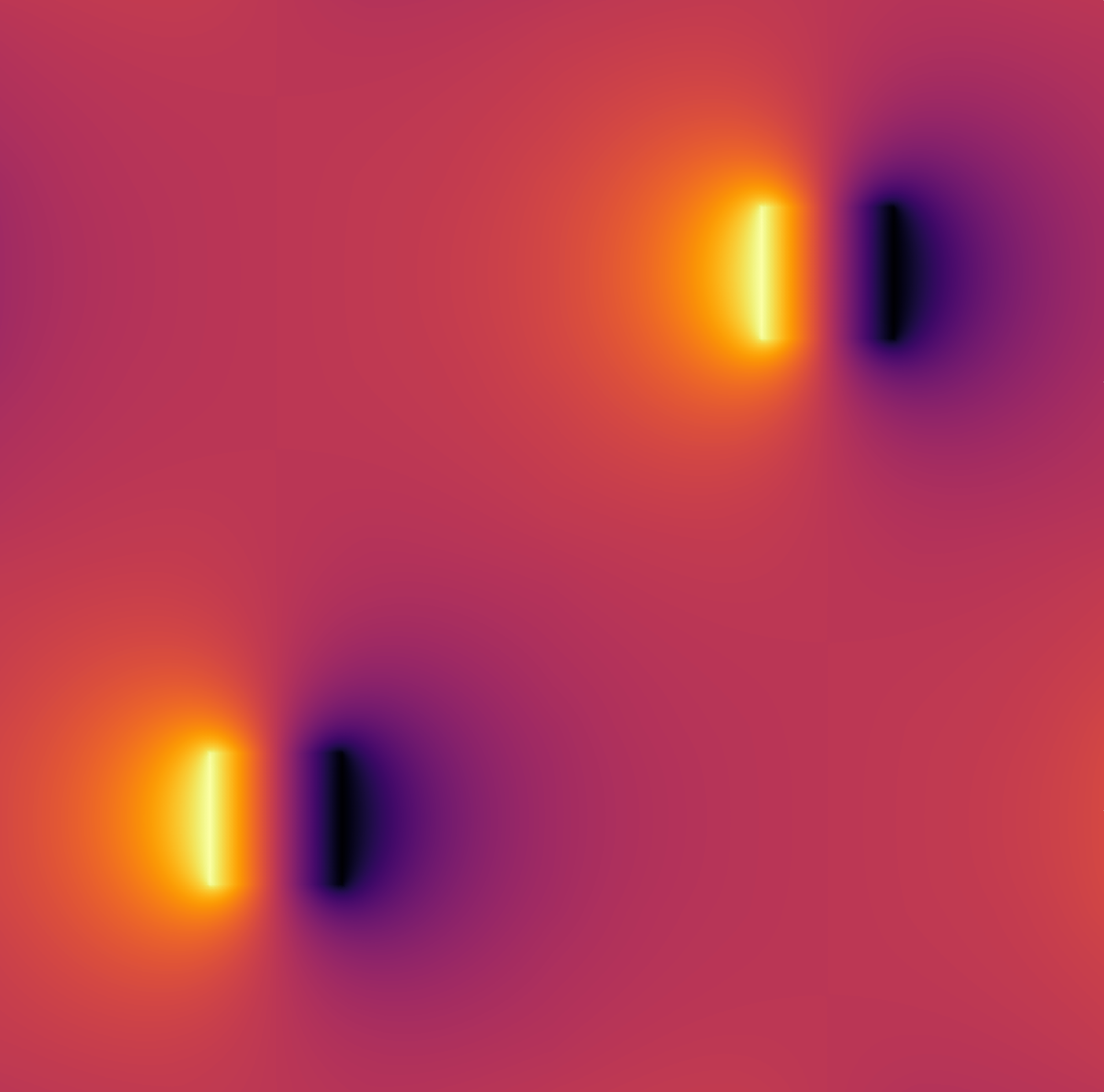}
    \caption{Rank 10}
    \label{fig:rank-wrt-defects_2}
  \end{subfigure}
  ~
  \begin{subfigure}{.3\linewidth}
    \centering{}
    \includegraphics[width=\linewidth,height=\textwidth]{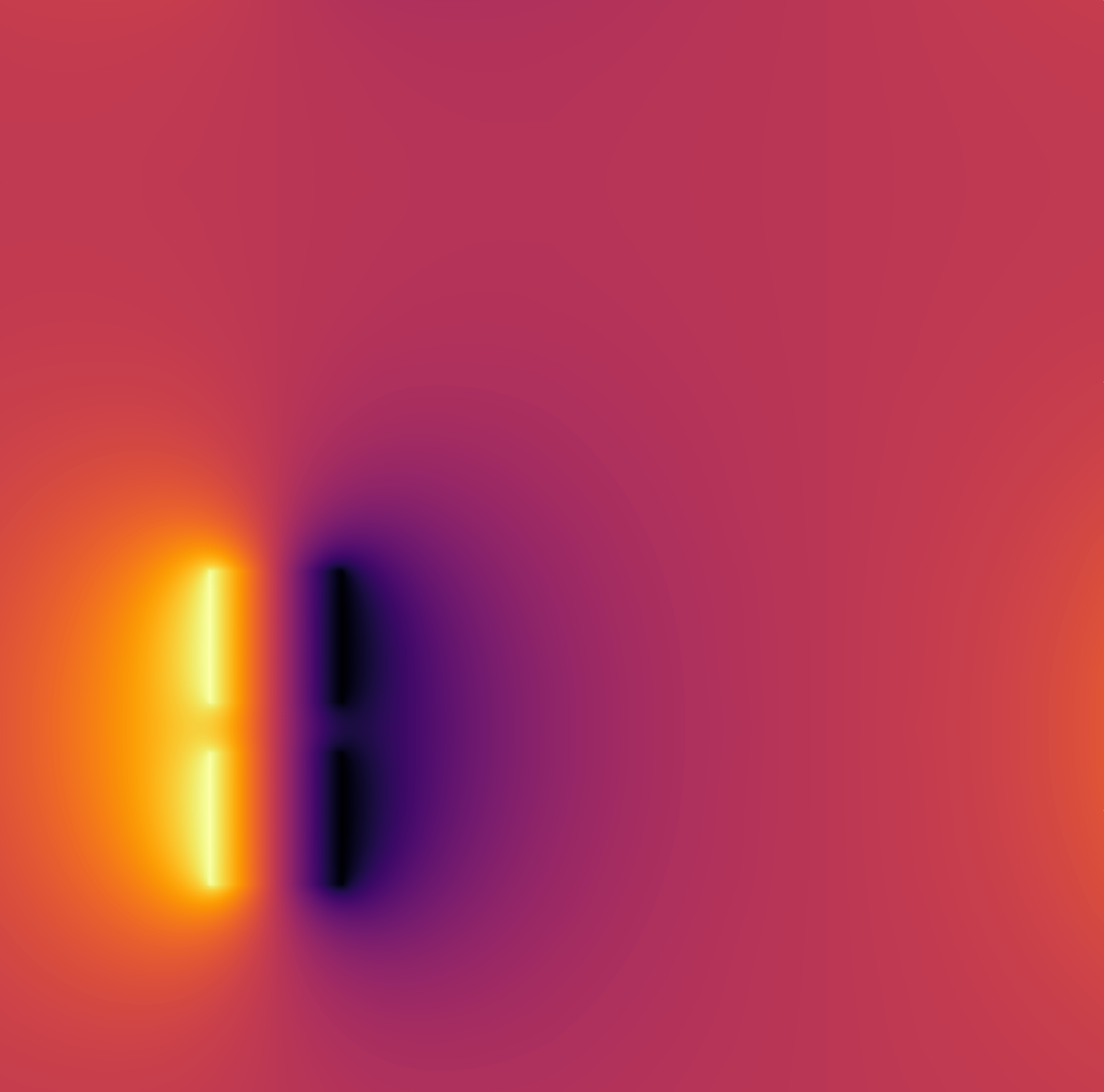}
    \caption{Rank 13}
    \label{fig:rank-wrt-defects_3}
  \end{subfigure}
  
  \begin{subfigure}{.3\linewidth}
    \centering{}
    \includegraphics[width=\linewidth,height=\textwidth]{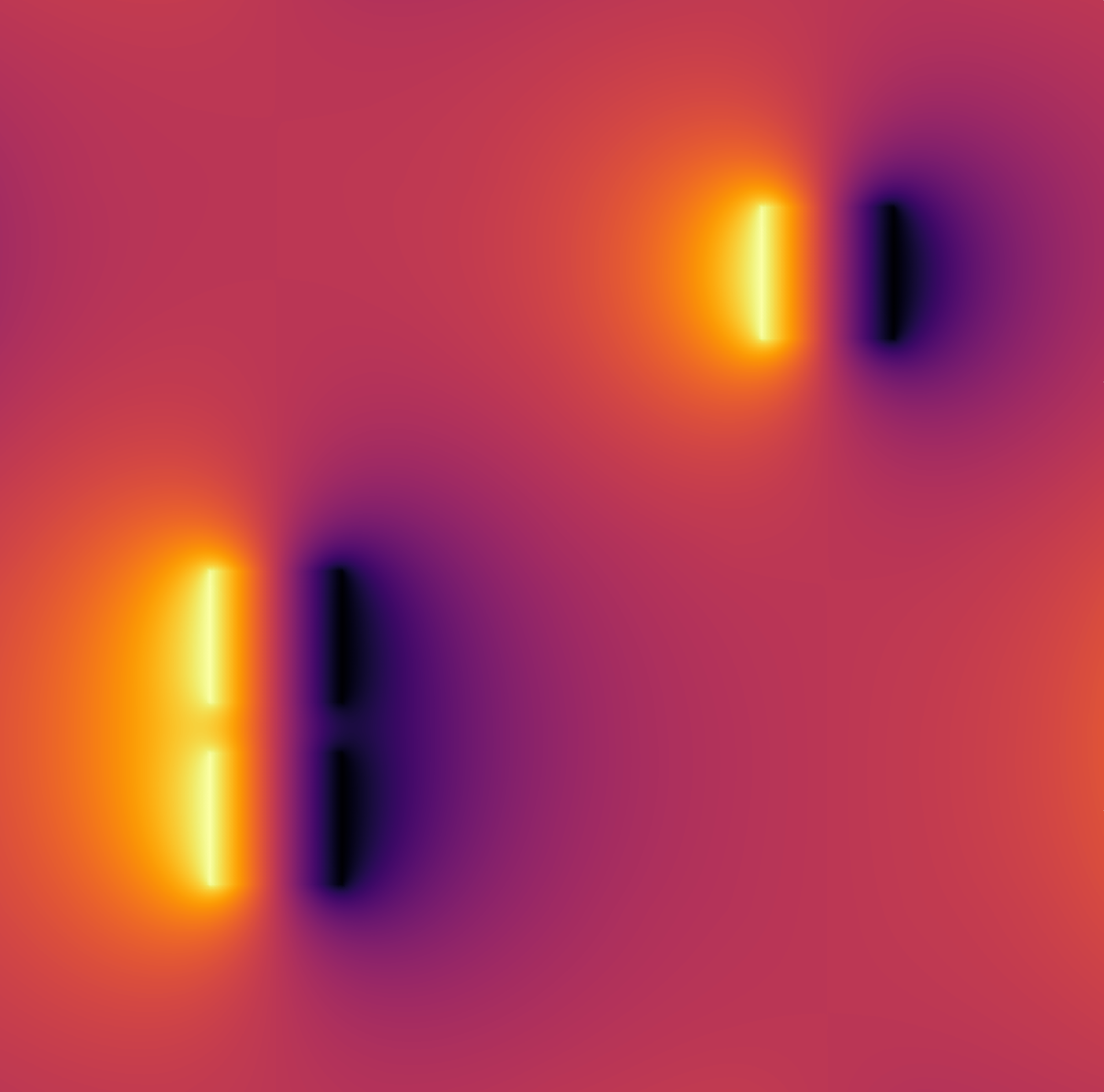}
    \caption{Rank 14}
    \label{fig:rank-wrt-defects_4}
  \end{subfigure}
  ~
  \begin{subfigure}{.3\linewidth}
    \centering{}
    \includegraphics[width=\linewidth,height=\textwidth]{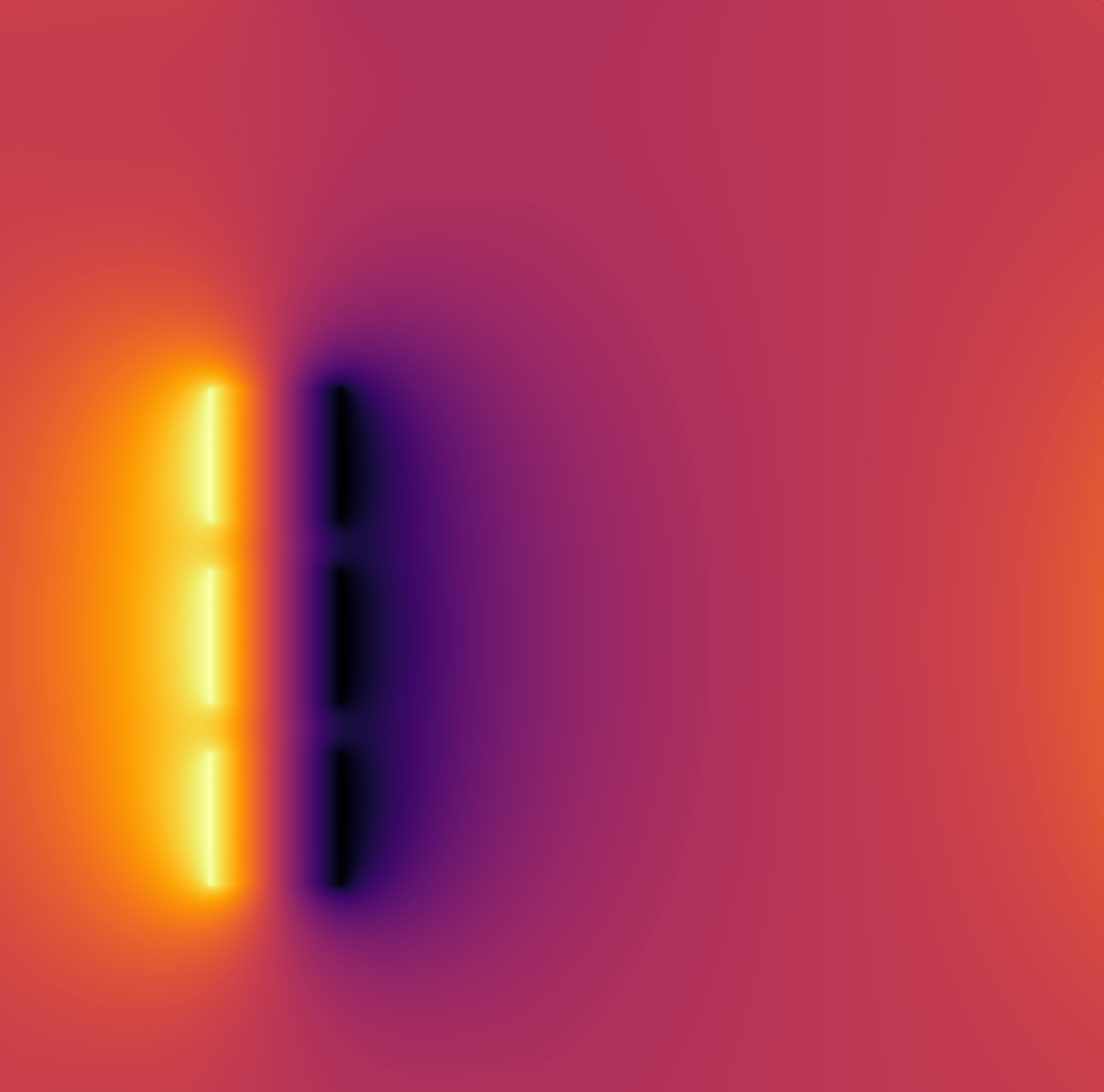}
    \caption{Rank 13}
    \label{fig:rank-wrt-defects_5}
  \end{subfigure}
  ~
  \begin{subfigure}{.3\linewidth}
    \centering{}
    \includegraphics[width=\linewidth,height=\textwidth]{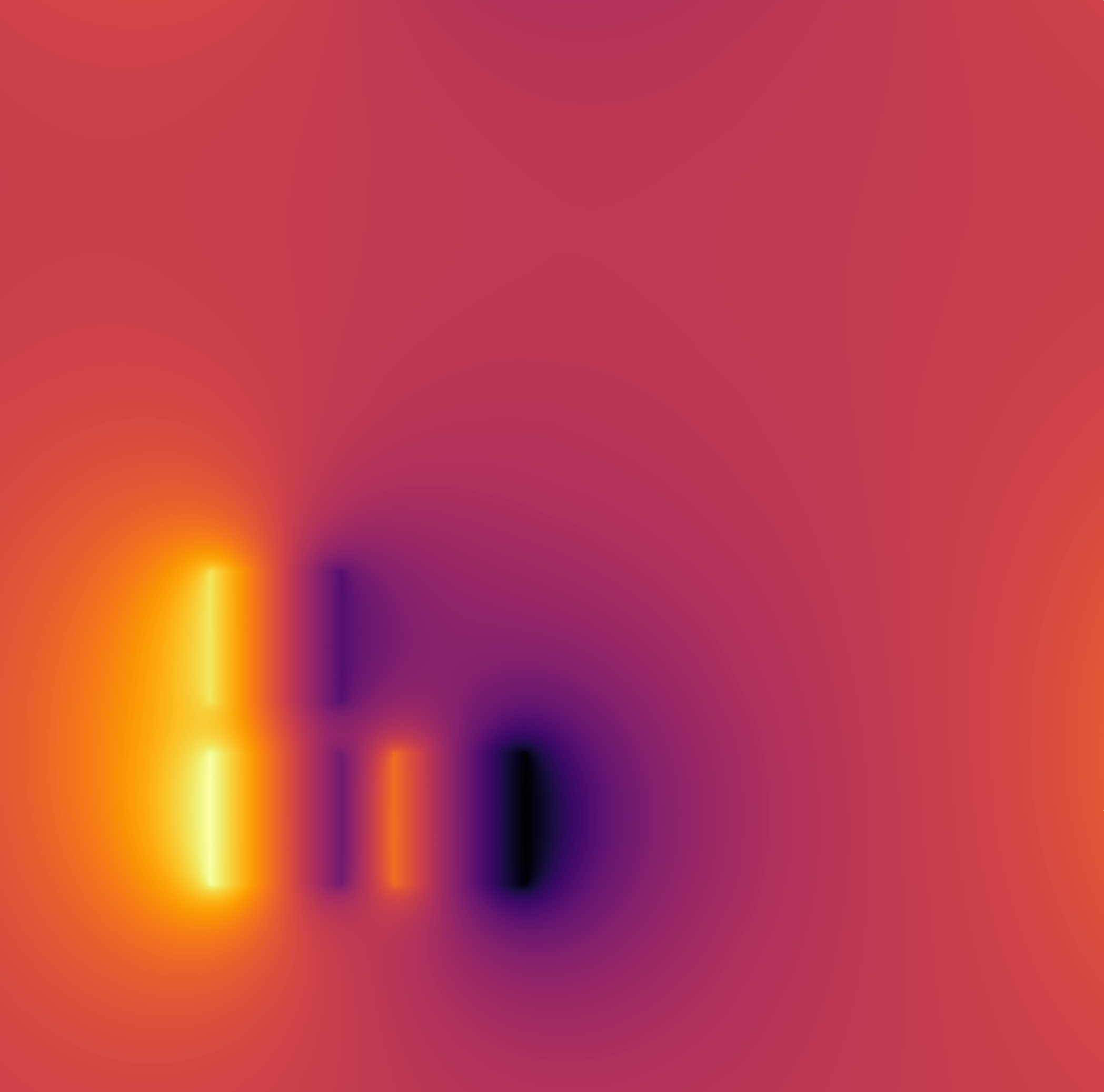}
    \caption{Rank 14}
    \label{fig:rank-wrt-defects_6}
  \end{subfigure}
  \caption{Approximation rank for various configurations of square inclusion defects}
  \label{fig:ranks-wrt-defects}
\end{figure}

Furthermore, we observed the influence of the probability of defect $p$ on the approximation rank.
For each value of $p$, we observed the average rank over \num{100} computations.
Here, the defect is a square inclusion, as in figure~\ref{fig:ranks-wrt-defects}.
The results are plotted on figure~\ref{fig:rank-proba_sq-avg} and display the expected low values when $p$ goes to $0$ or $1$, where we tend to a periodic medium.
The graph is slightly asymmetric: the highest approximation ranks were encountered when cells with inclusions were more likely.
A missing inclusion in a medium with periodic inclusions has less effect than an inclusion in a uniform medium.

To investigate the variability of ranks, we plotted their variance for each value of $p$ on figure~\ref{fig:rank-proba_sq-var}.
As for the average rank, this graph is slightly skewed.
The highest values are when $p$ goes to $0$ or $1$, \myie{} when the probability of getting a periodic medium and the probability of having at least one defect are of similar order.
This can be mostly explained by considering figure~\ref{fig:ranks-wrt-defects}: in this case the solution associated with a perfectly periodic medium would be of rank 1; one defect yields an approximation of rank\footnote{All ranks given here are for approximations with the tolerance value in table~\ref{tab:param-ref}.} 10 in figure~\ref{fig:rank-wrt-defects_1}; figures~\ref{fig:rank-wrt-defects_2}--\ref{fig:rank-wrt-defects_6} show that additional defects cause a much smaller increase in rank---none if no new pattern appears (\mycf{} figures~\ref{fig:rank-wrt-defects_3} and~\ref{fig:rank-wrt-defects_5}).
Therefore, the approximation rank reaches its highest variance for values of $p$ that makes a periodic medium as likely as a medium with at least one defect.

\begin{figure}
  \centering{}
  \begin{subfigure}{\textwidth}
    \centering
    \includegraphics[width=.7\textwidth,axisratio=2]{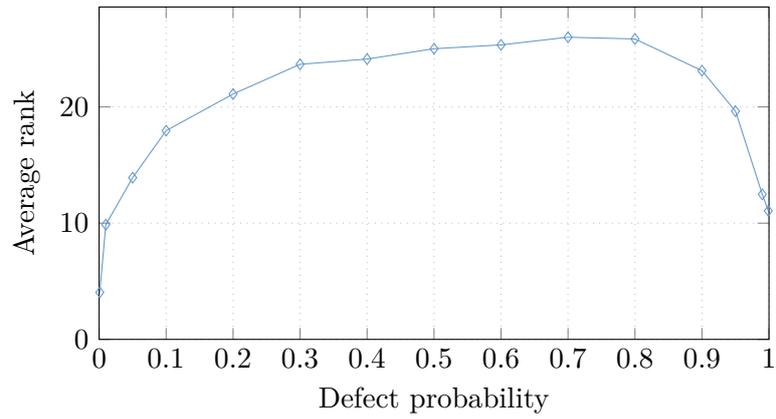}
    \caption{Average approximation rank}
    \label{fig:rank-proba_sq-avg}
  \end{subfigure}
  \\
  \begin{subfigure}{\textwidth}
    \centering 
    \includegraphics[width=.7\textwidth,axisratio=2]{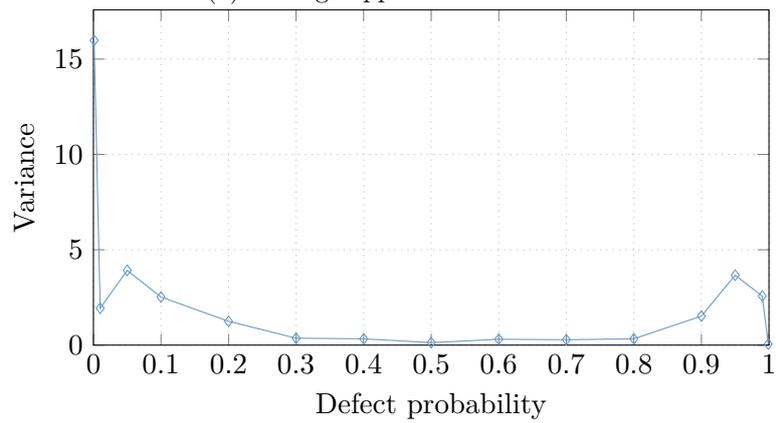}
    \caption{Variance of average approximation rank}
    \label{fig:rank-proba_sq-var}
  \end{subfigure}
  \caption{Effect of square inclusion probability on approximation rank ($\#I=400$, 100 samples)}
  \label{fig:rank-proba_sq}
\end{figure}

\subsection{Rank and precision}
\label{sec:rank-precision}

All previous results were obtained for a tolerance of $10^{-3}$.
We have seen the influence of conductivity patterns, problem size and source terms on the rank of the approximation.
Now, we analyse the convergence of the approximation with respect to the rank.
We consider a problem of missing inclusions as in section~\ref{sec:cond-patterns}, over a square domain of $400$ cells, and observe the evolution of the relative residual error as defined in equation~\eqref{eq:residual-error} with respect to the approximation rank.

Figure~\ref{fig:precision-rank} presents the results.
We observe an exponential convergence of the error with respect to the rank.
Tolerance remains a major factor in computational cost of the proposed low-rank method: for small domain size and high precision, a direct solution method would be more efficient.

\begin{figure}
  \centering 
  \includegraphics[width=.9\textwidth,axisratio=2]{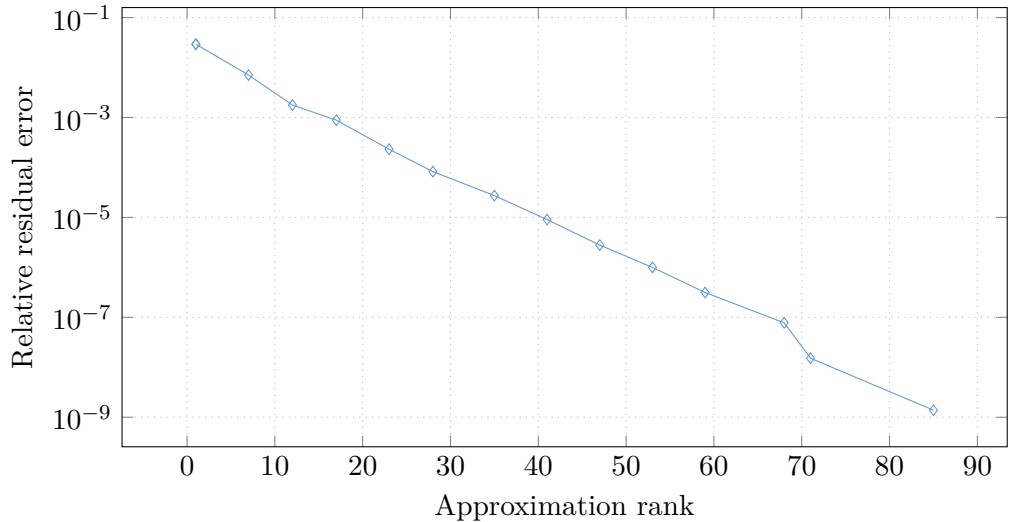}
  \caption{Approximation rank with respect to precision ($\#I=400$)}
  \label{fig:precision-rank}
\end{figure}

\section{Conclusion}
\label{sec:conclusion}

We have presented an approximation method to reduce the complexity of the solution of stationary diffusion problems in quasi-periodic media.
The method relies on a two-scale representation of the solution, which is identified with a tensor.
The method then exploits the fact that the solution admits accurate low-rank approximations.
A greedy algorithm is employed to build a non-optimal yet cost-efficient low-rank approximation with a desired precision.
The proposed method can be easily adapted to a larger class of linear elliptic PDEs.

Cost-efficiency has been illustrated comparatively to a direct solution method in numerical experiments with several conductivity patterns which are typical in composite materials.
Complexity reduction compared to the direct solution method has been observed on the different experiments.
Finally, the validity of the low-rank assumption has been tested with respect to precision and perturbation of periodicity.
A plateau in approximation rank with respect to domain size increase, attributed to the medium ergodicity, has been observed and suggests good performances for computations on large domains, even in case of low periodicity.

\section*{Acknowledgement}

The authors gratefully acknowledge the financial support from the Fondation CETIM.

\printbibliography

\end{document}